\documentclass[12pt,leqno]{amsart}
\usepackage[backend=biber,bibencoding=utf8,style=numeric,giveninits=true,doi=false,isbn=false,url=false,sorting=nyt]{biblatex}
\DeclareFieldFormat*{title}{#1} %
\renewbibmacro{in:}{}
\AtEveryBibitem{\clearlist{language}}
\AtEveryBibitem{%
 \clearlist{address}
 \clearfield{date}
 \clearfield{eprint}
 \clearfield{isbn}
 \clearfield{issn}
 \clearlist{location}
 \clearfield{month}
 \clearfield{series}
 \clearfield{doi}
 \clearfield{number}
 \ifentrytype{book}{}{%
  \clearlist{publisher}
  \clearname{editor}
 }
}
\addbibresource{willmoreMod5.bib}
\usepackage[a4paper,left=24mm,right=24mm,top=32mm,bottom=30mm]{geometry}
\usepackage{csquotes}
\usepackage{epic,eepic,verbatim,amsmath,amssymb,enumitem}
\usepackage{graphicx}
\usepackage{hyperref}
\usepackage{subfigure}
\usepackage{caption}
\usepackage{tikz}
\usetikzlibrary{positioning}
\usepackage{commath}
\usepackage{esint}

\newcommand{\eps}{\varepsilon}             %
\newcommand{\R}{\mathbb{R}}                %
\newcommand{\N}{\mathbb{N}}                %
\newcommand{\M}{\mathcal{M}}
\newcommand{\PE}{\mathcal{P}^{(\eps)}}              %
\newcommand{\tPE}{\tilde{\mathcal{P}}^{(\eps)}}     %
\newcommand{\mpen}{M_\eps} %
\newcommand{\F}{\mathcal F}                %
\newcommand{\A}{\mathcal A}                %
\newcommand{\E}{\mathcal E}                %
\newcommand{\calM}{\mathcal M}             %
\newcommand{\W}{\mathcal{W}}

\newcommand{\WFu}{\mathcal{W}^{(1)}}
\newcommand{\WFv}{\mathcal{W}^{(2)}}
\newcommand{\Chi}{\mathcal{X}}

\newcommand{\LL}{\mathcal{L}}
\newcommand{\HM}{\mathcal H}               %

\newcommand{\uu}{u}
\newcommand{\uv}{v}
\newcommand{\mc}{H}
\newcommand{\ssubset}{\subset\joinrel\subset}
\DeclareMathOperator{\loc}{loc}
\DeclareMathOperator{\dist}{dist}
\renewenvironment{proof}[0] {\noindent{\em Proof.}}{\hfill \qed\medskip }

\theoremstyle{plain}
\numberwithin{equation}{section}
\newtheorem{lemma}{Lemma}[section]
\newtheorem{theorem}[lemma]{Theorem}
\newtheorem{proposition}[lemma]{Proposition}

\theoremstyle{definition}

\begin{document}

\title[diffuse willmore flow]
{A new Diffuse-interface approximation of the Willmore flow}

\author[A. R\"atz]{Andreas R\"atz}
\email{andreas.raetz@tu-dortmund.de}
\author[M. R\"oger]{Matthias R\"oger}
\email{matthias.roeger@tu-dortmund.de}

\subjclass[2010]{35R35,35K65,65N30}

\keywords{free boundary problem, Willmore flow, phase-field model, diffuse
interface, finite elements}

\begin{abstract}
  Standard diffuse approximations of the Willmore flow often lead to intersecting phase boundaries that in many cases do not correspond to the intended sharp interface evolution.
  Here we introduce a new two-variable diffuse approximation
  that includes a rather simple but efficient penalization
  of the deviation from a quasi-one dimensional structure of the phase fields.
  We justify the approximation property by a Gamma convergence result for the energies and a matched asymptotic expansion for the flow. Ground states of the energy are shown to be one-dimensional, in contrast to the presence of saddle solutions for the usual diffuse approximation. Finally we present numerical simulations that illustrate the approximation property and apply our new approach to problems where the usual approach leads to an undesired behavior.
\end{abstract}

\date{\today}
\maketitle

\section{Introduction}
\label{sec:intro}

Curvature energies such as the elastica energy for plane curves or the
Willmore functional for two-dimensional surfaces appear in a variety of
applications in physics, biology or image processing. Diffuse approximations
of such energies are part of many descriptions of phase transition problems
and are used as a tool for numerical simulations of corresponding sharp
interface problems. The most prominent diffuse approximation of the
Willmore and elastica energy goes back to a suggestion of De Giorgi and
builds on the well-known Cahn--Hilliard--Van der Waals functional that
represents a perimeter approximation.

The approximation property of diffuse curvature energies is closely related
to a supposed quasi one-dimensional structure of phase fields that describe
moderate-energy states. For such structures the diffuse energies represent
a certain averaging of the (sharp interface) bending energy of level
lines. However, simulations \cite{EsRR14,BrMO15} with the standard
diffuse approximations of the elastica or Willmore functional show a
somehow non-intuitive behavior and the appearance of structures where
diffuse interfaces cross. Level lines then carry an unbounded bending
energy, and the generalized sharp interface energy of such structures (in
the sense of an $L^1$ relaxation of the energy on smooth configurations)
is infinite. Such behavior can be explained by the presence of (a wealth
of) entire solutions of the stationary Allen--Cahn equation that deviate
from the quasi one-dimensional structure. Such solutions have zero diffuse
mean curvature everywhere and therefore are favored by the diffuse energies.

In many applications a preference for such structures is not consistent
with the underlying physics or the intended behavior. Therefore several
suggestions of alternative diffuse approximations have been proposed
and analyzed. However, it seems that all present approaches come with
a number of disadvantages and difficulties. In this contribution we
introduce a new approximation that uses two phase field variables and
penalizes a deviation from a quasi one-dimensional structure of the phase
fields. For each variable a standard diffuse approximation is used,
but with different choices of the double-well potential that determine
the diffuse energy functionals. As long as both phase fields retain a
quasi one-dimensional structure they are up to a simple transformation
identical. This property motivates an additional energy contribution that
penalizes a deviation from the desired behavior.

We justify this new approximation by a Gamma convergence result and show
that zero energy states for the whole space problem are, in contrast to
the classical De Giorgi approximation, necessarily one-dimensional. By a
formal asymptotic expansion we show that a suitably rescaled $L^2$ gradient
flow of the diffuse energy converges to the Willmore flow. Finally, we
present numerical simulations that demonstrate the approximation property
and consider applications of our approach in situations where the standard
approximation leads to an undesired behavior.

\bigskip
In the following we fix a nonempty open set $\Omega \subset \R^n$. Let $\M$
denote the class of open sets $E\subset\Omega $ with $\Gamma=\partial
E\cap\Omega$ given by a finite union of embedded closed
$(n-1)$-dimensional  $C^2$-manifolds without boundary in $\Omega$. We
associate to such $\Gamma$ the inner unit normal field
$\nu:\Gamma\to\R^n$,  the shape operator  $A$  with respect
to $\nu$, and  the principal curvatures $\kappa_1, \dots,
\kappa_{n-1}$ with respect to $\nu$. Finally we define the scalar mean
curvature $H=\kappa_1+\ldots + \kappa_{n-1}$ and the mean curvature
vector $\vec{H}=H\nu$,
which implies that convex sets $E$ have positive mean curvature.

The Willmore energy \cite{Willmore93} is then defined as
\begin{equation}
  \label{eq:wf}
  \mathcal{W}(\Gamma) := \frac1{2}\int_\Gamma H^2(x)\; \dif
  \mathcal{H}^{n-1}(x).
\end{equation}
Since the mean curvature vector $\vec H$ of a surface $\Gamma$
represents the $L^2$-gradient of the area functional $\A$ at $\Gamma$,
we can characterize $\W$ as the squared $L^2$-norm of the gradient of
$\A$. This observation suggests also an Ansatz for building diffuse
approximations of the Willmore functional.

The $L^2$-gradient flow of $\W$ is called Willmore flow. For an
evolving family of sets $(E(t))_{t\in(0,T)}$ in $\M$ with boundaries
$\Gamma(t)=\partial E(t)\cap \Omega$ the velocity in direction of the
inner normal field $\nu(t)$ is given by
\begin{align}
  v(t) \,&=\, -\Delta_{\Gamma(t)} {H}(t) + \frac{1}{2}H(t)^3 -
  H(t)|A(t)|^2, \label{eq:wflow}
\end{align}
on $\Gamma(t)$, where $|A(t)|^2=\kappa_1^2+\dots+\kappa_{n-1}^2$
denotes the squared Frobenius norm of the shape operator $A(t)$ and
$\Delta_{\Gamma(t)}$ denotes the Laplace--Beltrami operator on
$\Gamma(t)$.

In two dimensional space the Willmore functional for curves and the
Willmore flow are better known as Eulers elastica functional and
evolution of elastic curves. In this case \eqref{eq:wflow} reduces to
\begin{align}
  v(t)\,&=\, -\Delta \kappa_{\Gamma(t)}(t) -\frac{1}{2}\kappa(t)^3,
\end{align}
where $\kappa(t)$ denotes the curvature of $\Gamma(t)$. The Willmore
flow of a single curve in the plane exists for all times \cite{DzKS02}
and converges for fixed curve length to an elastica, see also
\cite{DAPo14,DALiPo17} for further recent results on the topic.

\subsection*{Standard diffuse approximation}
A well-known and widely used diffuse-interface approximation of the
Willmore energy builds on the Cahn--Hilliard energy
\begin{equation}
  \A_\eps(u) := \int_{\Omega} \Big(\frac{\eps}{2} |\nabla u|^2 +
  \frac{1}{\eps}W(u)\Big)\dif\LL^n,
  \label{eq:Pd}
\end{equation}
where $W$ is a suitable double-well potential and $u$ is a smooth function
on $\Omega$.

The celebrated result by Modica and Mortola \cite{MoMo77,Modi87} states
that this functionals converge, in the sense of Gamma convergence with
respect to the  $L^1(\Omega)$ topology, to a constant multiple of the
perimeter functional $\A$,
\begin{equation}
  \A_\eps \to \sigma \A,\quad
  \sigma = \int_0^1 \sqrt{2W(s)}\,ds.
  \label{eq:sigma}
\end{equation}
De Giorgi \cite{Gior91} conjectured that an approximation of the Willmore
energy is given by the squared
$L^2(\Omega)$-gradient of $\A_\eps$, integrated against the diffuse area measure. In a slightly modified form that
was introduced by Bellettini and Paolini \cite{BePa95} this leads to
the functional
\begin{align}
  \W_\eps(u) \,&:=\, \int_{\Omega} \frac{1}{2\eps} \Big(-\eps\Delta u +
  \frac{1}{\eps}W'(u)\Big)^2\dif\LL^n, \label{eq:wd}
\end{align}
that we consider in this paper as the {\em standard diffuse approximation}.
The approximation property has been confirmed in a number of
situations. Bellettini and Paolini \cite{BePa95} provided the
Gamma-$\limsup$ estimate in arbitrary dimensions. To construct a recovery
sequence for a given set $E\in\M$ two main ingredients are used: first
the optimal transition profile that connects the wells of the double-well
potential $W$, given by the unique solution $q:\R\to (0,1)$ of
\begin{equation*}
  -q'' +W'(q) =0 \quad\text{ in }\R,\quad q(-\infty)=0,\,
  q(\infty)=1,\, q(0)=\frac{1}{2}
\end{equation*}
and second the signed distance $d$ from $\Gamma$. Then, close
to $\Gamma$ the approximating phase fields are given by $u_\eps\approx
q(d/\eps)$. This is the quasi one-dimensional structure that one
might expect for phase fields with low energy values. The $\liminf$ estimate
necessary for the Gamma convergence of the diffuse Willmore approximations
turned out to be much more difficult. It was proved under several additional
assumptions in \cite{BeMu05,Mose05} and finally for dimensions $n=2,3$
and for $C^2$-regular limit points in \cite{RoeSc06}. Note that in all
these results the sum of diffuse area and diffuse Willmore functional was
considered. However, the $\liminf$-estimate for the Willmore part itself
holds as long as the diffuse surface area remains uniformly bounded.

\medskip
Building on the approximation for the Willmore functional, the corresponding
formal approximation of the Willmore flow is given by the
$L^2(\Omega)$-gradient flow of $\W_\eps$,
\begin{align}
  \eps \partial_t u \,&=\, -\Delta \big(-\eps\Delta u+
  \frac{1}{\eps}W'(u)\big) + \frac{1}{\eps^2}W''(u)\big(-\eps\Delta u+
  \frac{1}{\eps}W'(u)\big), \label{eq:Wf-d}
\end{align}
complemented by suitable boundary conditions for $u$ on
$\partial\Omega$ and an initial condition for $u$ in $\Omega$.
The convergence of the diffuse evolution towards the Willmore flow was
shown by formal asymptotic expansions by Loreti and March \cite{LoMa00}
and by Wang \cite{Wa08}. Here a smooth evolution $(\Gamma(t))_{t\in (0,T)}$
of smooth surfaces and a phase field evolution $(u_\eps(\cdot,t))_{t\in
(0,T)}$ is considered that solves \eqref{eq:Wf-d}. If one assumes that
the phase fields can be expanded around $(\Gamma(t))_t$, then the phase
fields asymptotically have the expected one-dimensional structure and
$(\Gamma(t))_{t\in (0,T)}$ evolves by Willmore flow.

\medskip
The diffuse approximation of the Willmore flow and of more general curvature
energies and flows are used for numerical simulations in a huge number
of applications. Let us only mention here
\cite{DuLiWa04,BiKaMi05,DuLiWa06,CaHeMa06,WaDu08,LoRaVo09,DoMR11,DoLeWo17,EsRR14,WaJD16,BrDM17,FrRuWi13}. For
numerical treatment of Willmore flow in a sharp-interface approach, we
refer to
\cite{BaGaNu10,BaGaNu08,BaGaNu07,Ru05,DeDzEl05,BoNoPa10,ElSt10}. Moreover,
level set techniques have been applied in order to simulate Willmore
flow in \cite{DrRu04,BeMiObSe09}.

\section*{The diffuse Willmore approximation for non-smooth limit
configurations}
\subsection*{Numerical simulations}
As mentioned above, it has been observed (see \cite{EsRR14,BrMO15} and
the references therein) that, in particular in two dimensions, simulations
based on the standard approximation lead to an in many cases undesired
behavior: when diffuse interfaces meet they tend to produce transversal
intersections of boundary layers.

This behavior is nicely illustrated if one starts from a number of equally
distributed small circles in a unit square. The elastica functional of a
ball is proportional to one over the radius, thus the balls start growing
under the Willmore (elastica) flow and touch each other in finite time. From
this point on there is no uniquely defined way how to continue the flow
and the appropriate choice might depend on the present application. In
any diffuse approximation the different balls will interact in a non-local
way. The standard diffuse Willmore flow selects an evolution where diffuse
interfaces start to flatten away from the touching points. Eventually,
a perfect checkerboard pattern develops. In the simulations, the diffuse
Willmore energy becomes extremely small in this situation.

The described behavior is in two dimensions not exceptional but rather
generic for colliding phase interfaces that evolve subject to a descent
dynamics of the standard diffuse Willmore energy. Another example occurs
in the following application that considers two phases in a fixed volume
that interact with a given inclusion. The phase is assumed to minimize
an energy consisting of a bending contribution of the phase boundary
and an adhesion energy (decreasing with increasing contact) between
one of the phases and the inclusion. In section \ref{sec:applications}
we will study this application in more detail and apply our new
diffuse approximation to this problem. Here we only consider the
evolution in the case of the standard diffuse Willmore energy and a
gradient descent method for the total energy, see
Fig.~\ref{fig:obstacle}.

\medskip
\begin{center}
  \begin{tikzpicture} %
    \node (f1) at (0,0)
    {\includegraphics*[width=0.2\textwidth]%
      {./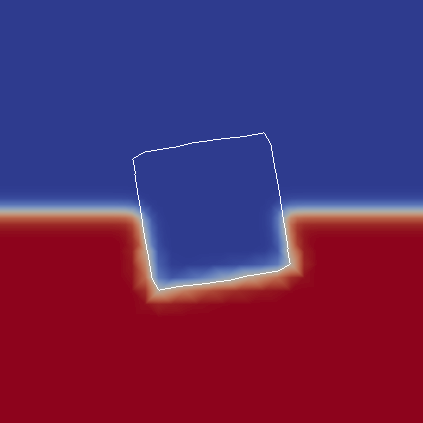}
    };
    \node (f2) [right=0.1ex of f1]
    {\includegraphics*[width=0.2\textwidth]%
      {./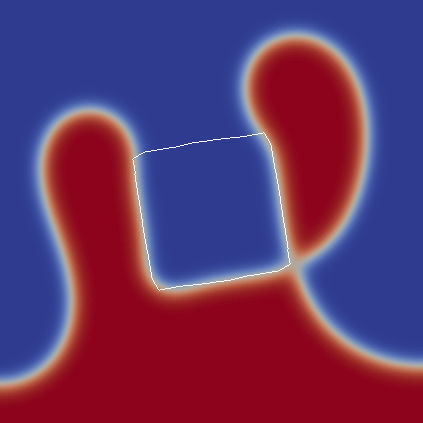}
    };
    \node (f3) [right=0.1ex of f2]
    {\includegraphics*[width=0.2\textwidth]%
      {./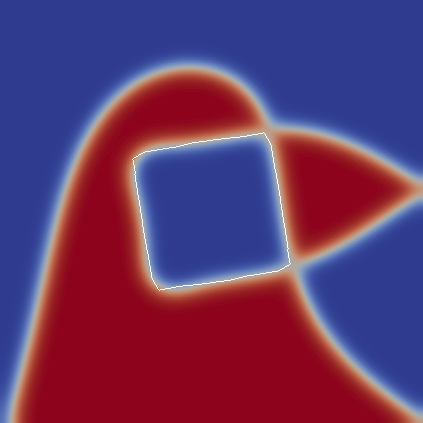}
    };
    \node (f4) [right=0.1ex of f3]
    {\includegraphics*[width=0.2\textwidth]%
      {./figures/willmoreBall147/phi1_0_5914285.png}
    };
    \node (f5) [right=0.1ex of f4]
    {\includegraphics*[width=0.07\textwidth]%
      {./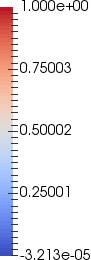}
    };
  \end{tikzpicture}
    \captionof{figure}{\footnotesize Evolution of standard flow
    with an additional adhesion to inclusion contribution: Discrete
    phase-field $u_h$ for different times   $t=0$, $t\approx0.0077$,
    \mbox{$t\approx0.0237$,} $t\approx0.5914$. The white line indicates
    the boundary of the inclusion.}\label{fig:obstacle}
\end{center}
\medskip
We clearly see that the evolution leads to configurations that do not correspond to an elastic behavior. In
particular, introducing edge like phase boundaries should not be favorable.

Such examples have already motivated several alternative diffuse
approximations. Before we comment on these and introduce our new approach
we first will discuss particular entire solutions of the Allen--Cahn
equation that promote the occurrence of intersecting phase boundaries in
diffuse approximations.

\bigskip
\subsection*{Entire solutions of the stationary Allen--Cahn equation}
The observed behavior and the occurrence of intersecting phase boundaries
does not contradict the Gamma convergence for the diffuse Willmore energy,
as all these results are restricted to $C^2$-regular interfaces and
intersections are not considered. It is an open problem to characterize the
sharp interface configurations that are in the domain of the Gamma limit of
the diffuse Willmore approximation and to characterize the Gamma limit for
non-smooth configurations. One could have guessed that the latter is given
by the $L^1$-lower-semi-continuous relaxation of the elastica energy, which
was characterized and investigated in \cite{BeDP93,BeMu07}. However, this
is not the case as can be seen from the existence of saddle solutions of the
stationary Allen--Cahn equation, which are characterized as entire solutions
that have $\frac{1}{2}$ level lines given by the union of the coordinate
axis in the plane and change sign from one quadrant to the other. The
existence of such solutions was first proved by Dang, Fife and Peletier
\cite{DaFP92} and later extended in several directions \cite{AlCM07,CaTe09}. As a consequence
of a simple spatial rescaling, this leads to a sequence $(u_\eps)_{\eps}$
of zero energy states of the standard diffuse Willmore functional that
converges to the characteristic function the first and third quadrant in
the plane. Such a configuration on the other hand has been shown to have
infinite energy with respect to the $L^1$-relaxation of the elastica energy
\cite{BeDP93}. From the observations above one may conjecture \cite{BrMO15}
that the Gamma limit of the diffuse Willmore functional is in fact given
by a generalization of the Willmore functional in the sense of integral
varifolds that behave additively on unions of one-dimensional sets. This
is in coincidence with results in \cite{Zwil18}, where intersections of
embedded curves are considered. The general case, however is open.

\subsection*{Alternative approximations that avoid intersecting phase
boundaries}
Several alternative diffuse approximations of the Willmore functional
have been introduced that avoid the occurrence of intersecting phase
boundaries. Bellettini \cite{Bell97} proposed such type of approximations
for general geometric functionals. For the Willmore functional the squared
mean curvature of the level sets of the phase field $u$ are integrated
with respect to the diffuse area density,
\begin{equation}
  \hat{\W}_\eps(u) := \frac1{2}\int_{\Omega\setminus \{|\nabla u|=0\}}
  \Big(\nabla \cdot
  \frac{\nabla u}{|\nabla u|}\Big)^2 \Big(\frac{\eps}{2} |\nabla u|^2 +
  \eps^{-1}W(u)\Big)\dif\LL^n. \label{eq:bell}
\end{equation}

An alternative approximation of the elastica functional has been
investigated by Mugnai \cite{Mugn13}, where the square integral of the
diffuse second fundamental form is considered,
\begin{equation}
  \label{eq:wplusadM}
  \bar{\W}_\eps(u) := \frac1{2 \eps}\int_\Omega\left|\eps D^2
  u - \eps^{-1}W'(u)\frac{\nabla u}{|\nabla u|} \otimes \frac{\nabla
    u}{|\nabla u|} \right|^2 \dif\LL^n.
\end{equation}

Finally, a third alternative has been introduced in \cite{EsRR14}. There,
the sum of the standard diffuse Willmore approximation and a suitable
additional energy contribution was considered that penalizes the deviation
of an appropriate rescaling of the diffuse mean curvature $w=-\eps\Delta u
+\frac{1}{\eps}W'(u)$ from the level set mean curvature $v= \nabla \cdot
\frac{\nabla u}{|\nabla u|}$. More precisely, the additional penalty term
is given
\begin{equation}
  \label{eq:energyAdd}
  \mathcal{P}_\eps(u) :=
  \frac1{2\eps^{1+\alpha}}\int_\Omega \left(w + \left(\eps|\nabla
  u|\sqrt{2W(u)}\right)^{\frac{1}{2}}v \right)^2 \dif\LL^n,
\end{equation}
where $0\leq \alpha \le 1$.

In \cite{Bell97,Mugn13,EsRR14}, for each of these proposals the
Gamma-convergence to the $L^1$-lower semicontinuous envelope of the Willmore
functional has been shown (for the $\liminf$ estimate a uniform bound on
the diffuse area is assumed). In \cite{EsRR14}, for the third proposal
numerical simulations have been included and a discussion of possible
equilibrium shapes in specific situations where discussed. Br{\'e}tin,
Masnou and Oudet \cite{BrMO15} compared the different approaches, showed
the convergence of the corresponding $L^2$-gradient flows to the Willmore
flow by formal asymptotic expansions, and discussed a number of numerical
simulations.

All three alternative approximations have the advantage that the Gamma
convergence to the $L^1$-lower semicontinuous envelope can be rigorously
shown. On the other hand, using these approaches to numerically simulate
the diffuse Willmore flow comes with several difficulties and obstacles
in all three cases, e.g. \eqref{eq:bell},\eqref{eq:energyAdd} include
the level set mean curvature which can lead to numerical difficulties,
especially due to its highly nonlinear nature and since it appears in
corresponding flows to leading order. Moreover, \eqref{eq:wplusadM}
includes the full second derivative of $u$ leading to terms which do not
have a divergence structure. We therefore
introduce in this paper a new approach that is much more easy to implement
for numerical simulations and that also seems to exclude non-generic
configurations. As a partial justification of this observation,
we prove below that zero energy states ('ground states') necessarily have
a one-dimensional structure and that Gamma-convergence in smooth points
still holds.

\section{A new diffuse Willmore flow avoiding intersections of phase
boundaries}
\label{sec: willmoreMod}

\subsection*{Doubling of variables.}
The key idea is to consider two order parameters $\uu$,
$\uv$ and diffuse Willmore energies $\W_\eps^{(1)}(\uu)$ and
$\W_\eps^{(2)}(\uv)$, where
\begin{align}
  \label{eq:We12}
  \W_\eps^{(j)}(u) := \int_\Omega\frac{1}{2\eps}
  \Big(-\eps\Delta u +
  \frac{1}{\eps}W_j'(u)\Big)^2\dif\LL^n\quad j=1,2
\end{align}
with two different double well potentials $W_1, W_2$. We then consider the
corresponding optimal profile functions $q_j:\R\to (0,1)$,
\begin{equation}
  -q_j'' + W_j'(q_j) =0,\quad \lim_{r\to -\infty}q_j(r)=0,\quad
  \lim_{r\to \infty}q_j(r)=1,\quad q_j(0)=\frac12.
  \label{eq:opt-pro}
\end{equation}
The profiles $q_j$ are strictly monotone increasing and characterized by
\begin{equation}
  q_j' = \sqrt{2W_j(q_j)},\quad q_j(0)=\frac12.
  \label{eq:opt-pro2}
\end{equation}
We then define $\Phi:(0,1)\to (0,1)$ by $\Phi=q_2\circ q_1^{-1}$ and observe
that $\Phi$ can be extended to a continuous strictly increasing and
bijective
function $\Phi:[0,1]\to [0,1]$. We obtain the properties
\begin{equation}
  q_2 = \Phi(q_1),\qquad \Phi'(q_1)= \frac{q_2'}{q_1'}
\end{equation}
and $\Phi\in C^\infty((0,1))$ for smooth double-well potentials $W_j$,
$j=1,2$.

We remark that $2W_2(q_2)= (q_2')^2 = \Phi'(q_1)^2(q_1')^2 = 2\Phi'(q_1)^2
W_1(q_1)$, which yields
\begin{equation}
  W_2\circ\Phi = (\Phi')^2W_1,\qquad W_2'\circ\Phi = 2\Phi''W_1 + \Phi' W_1'
  \label{eq:F2F1}
\end{equation}
Finally, we define $\Psi:= \Phi^{-1}$.

For given $W_1,W_2$ the transformation $\Phi$ can be determined from
\eqref{eq:opt-pro2}, which gives
\begin{equation}
  \int_{\frac{1}{2}}^{\Phi(r)}\big(2W_2(s))^{-\frac{1}{2}}\,ds
  = \int_{\frac{1}{2}}^{r}\big(2W_1(s))^{-\frac{1}{2}}\,ds.
  \label{eq:Phi}
\end{equation}
The motivation of our approach is that for quasi one-dimensional configurations
$\uu,\uv$ with nearly optimal energy with respect to the diffuse
approximations
$\W_\eps^{(1)}$ and $\W_\eps^{(1)}$, respectively, the function $\uv$
is very
close to $\Phi(\uu)$.
On the other hand, some discrepancy occurs if at least one of both is
different from the generic one-dimensional structure. A penalization of such
discrepancy prevents the phase fields from evolving to non-generic
configurations.

More specifically we introduce
for two functions $\uu, \uv$ an additional energy contribution
\begin{align}
  \PE(\uu,\uv):= \int_\Omega P\big(\Phi(\uu)-\uv\big) \dif \LL^n,
  \label{eq:penaltyEnergy}
\end{align}
where $P$ is some fixed continuous function $P:\R\to \R^+_0$ with $P(0)=0$
and $P>0$ on $\R\setminus \{0\}$.

Moreover, we choose a penalty parameter $\mpen \ge 0$
with $\mpen\to\infty$ for $\eps\to 0$. The total energy then reads
\begin{align}
  \label{eq:totalEnergy}
  \F_{\eps}(\uu,\uv) &:= \WFu_{\eps}(\uu)
  + \WFv_{\eps}(\uv)
  + \mpen\PE(\uu,\uv)\\
  &= \int_\Omega\frac{1}{2\eps}
  \Big(-\eps\Delta \uu +
  \frac{1}{\eps}W_1'(\uu)\Big)^2\dif\LL^n + \int_\Omega\frac{1}{2\eps}
  \Big(-\eps\Delta \uv +
  \frac{1}{\eps}W_2'(\uv)\Big)^2\dif\LL^n \notag\\
  &\qquad\qquad + \mpen \int_\Omega P(\Phi(\uu)-\uv) \dif \LL^n. \notag
\end{align}

\subsection*{Choices of double-well potentials and penalty term.}
We always require the following conditions of the double-well potentials
$W_1,W_2$ to be satisfied:
\begin{align}
  W_j\in C^2(\R),\quad  W_j(0)=W_j(1)=0,\quad
  W_j>0\text{ on }\R\setminus \{0,1\}, W_j''(0),W_j''(1)>0.
  \label{eq:double-well}
\end{align}
A standard example that we in particular use in our numerical simulation is
given by
\begin{equation*}
  W_1(r) = 18r^2(1-r)^2,\qquad W_2 = 4 W_1,
\end{equation*}
see Section \ref{sec:choices} below for the properties that are induced by this choice.

For the penalty energy $\PE$ we choose
$P(\Phi(\uu)-\uv) = (\Phi(\uu) - \uv)^k$ with an even number
$k=2l$, $l\in\N$. We let $k=4$ in the numerical simulations below.

For the penalty parameter we assume that
\begin{equation}
  \big(|\log\eps|q_2'(|\log\eps|)\big)^k\eps|\log\eps|M_\eps \to 0
  (\eps\to 0).
  \label{eq:ass-M}
\end{equation}
A particular choice that we consider in the asymptotic expansions and
numerical simulations below is $M_\eps=\eps^{-2}M$.

\subsection*{Evolutions laws.}
We prescribe a rescaled  $L^2(\Omega)\times L^2(\Omega)$ gradient flow
for the
total energy $\F_{\eps}$ from \eqref{eq:totalEnergy}
\begin{align}
  \label{eq:WfMod1}
  \eps \partial_t \uu&= - \nabla_{\uu}\F_{\eps}(\uu, \uv)
  = - \nabla_{\uu}\WFu_{\eps}(\uu)
  - \mpen\nabla_{\uu}\PE(\uu,\uv),\\
  \label{eq:WfMod2}
  \eps \partial_t \uv&= - \nabla_{\uv}\F_{\eps}(\uu, \uv)
  = - \nabla_{\uu}\WFv_{\eps}(\uv)
  - \mpen\nabla_{\uv}\PE(\uu,\uv),
\end{align}
where $\nabla_{\uu},\nabla_{\uv}$ denote the $L^2(\Omega)$-gradients with
respect to $\uu,\uv$ respectively.

We compute
\begin{align}
  \nabla_{\uu}\WFu_{\eps}(\uu) &= \big(-\Delta+
  \frac{1}{\eps^2}W_1''(\uu)\big)
  \big(-\eps\Delta \uu+\frac{1}{\eps}W_1'(\uu)\big),\\
  \nabla_{\uu}\WFv_{\eps}(\uu) &= \big(-\Delta+
  \frac{1}{\eps^2}W_2''(\uu)\big)
  \big(-\eps\Delta \uu+\frac{1}{\eps}W_2'(\uu)\big),\\
  \nabla_{\uu}\PE(\uu,\uv) &= -P'(\phi(\uu)-\uv)\phi'(\uu),\\
  \nabla_{\uv}\PE(\uu,\uv) &= P'(\phi(\uu)-\uv)
\end{align}
and derive the system of fourth order evolution equations
\begin{align}
  \eps\partial_t \uu &= -\big(-\Delta+ \frac{1}{\eps^2}W_1''(\uu)\big)
  \big(-
  \eps\Delta \uu+\frac{1}{\eps}W_1'(\uu)\big)+\mpen P'(\phi(\uu)-
  \uv)\phi'(\uu), \label{eq:WfMod-ex1}\\
  \eps\partial_t \uv &= -\big(-\Delta+ \frac{1}{\eps^2}W_2''(\uv)\big)
\big(\eps\Delta \uu+\frac{1}{\eps}W_2'(\uv)\big)-\mpen P'(\phi(\uu)-\uv).
 \label{eq:WfMod-ex2}
\end{align}

\section{Zero energy states in the whole space}
In this section we consider $\Omega=\R^n$ and configurations with
vanishing energy. For the standard diffuse Willmore energy we recall
that there
exist zero energy states that depend not only on one variable, such
as specific
entire solutions of the Allen--Cahn equation like the saddle solutions from
\cite{DaFP92}.

In contrast, for our total energy we prove that zero energy states are always
one-dimensional.

\begin{theorem}[Ground states are one-dimensional]
Assume that the double-well potentials $W_1,W_2$ satisfy
\eqref{eq:double-well}, and that $\{\Phi''=0\}$ is a discrete set.
Let $\eps>0$, $\Omega=\R^n$ and
consider $\uu_\eps,\uv_\eps\in H^2_{\loc}(\R^n)$, $|\uu_\eps|,|\uv_\eps|\leq
1$
with $\F_\eps(\uu_\eps,\uv_\eps)=0$.

Then one of the following two alternatives hold:
\begin{enumerate}
  \item $\uu_\eps,\uv_\eps$ are constant with
    $W_1'(\uu_\eps)=0$, $W_2'(\uv_\eps)=0$ and $\Phi(\uu_\eps)=\uv_\eps$.
  \item There exist $\nu\in\mathbb{S}^{n-1}$ and $x_0\in\R^n$ with
    $\uu_\eps(x)=q_1\big((x-x_0)\cdot\nu \big)$ and
    $\uv_\eps(x)=q_2\big((x-x_0)\cdot\nu \big)$.
\end{enumerate}
\end{theorem}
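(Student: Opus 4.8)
The plan is to use that $\F_\eps$ is a sum of three nonnegative contributions, so that $\F_\eps(\uu_\eps,\uv_\eps)=0$ forces each of them to vanish separately. Abbreviating $\uu=\uu_\eps$, $\uv=\uv_\eps$ and using $\mpen>0$, I first record the three consequences: $\WFu_\eps(\uu)=0$ yields the Allen--Cahn equation $\eps^2\Delta\uu=W_1'(\uu)$, $\WFv_\eps(\uv)=0$ yields $\eps^2\Delta\uv=W_2'(\uv)$, and $\PE(\uu,\uv)=0$ together with $P>0$ on $\R\setminus\{0\}$ yields the pointwise coupling $\uv=\Phi(\uu)$. Elliptic bootstrapping (using $|\uu|,|\uv|\le1$ and $W_j\in C^2$) promotes these $H^2_{\loc}$ solutions to classical $C^{2,\alpha}_{\loc}$ solutions, so that all subsequent identities may be read pointwise.

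The key step is to convert the coupling into an equipartition identity. Differentiating $\uv=\Phi(\uu)$ twice gives $\Delta\uv=\Phi''(\uu)|\nabla\uu|^2+\Phi'(\uu)\Delta\uu$; inserting this and $\eps^2\Delta\uu=W_1'(\uu)$ into $\eps^2\Delta\uv=W_2'(\Phi(\uu))$ produces $\eps^2\Phi''(\uu)|\nabla\uu|^2+\Phi'(\uu)W_1'(\uu)=W_2'(\Phi(\uu))$. The relation $W_2'\circ\Phi=2\Phi''W_1+\Phi'W_1'$ from \eqref{eq:F2F1} cancels the common term $\Phi'(\uu)W_1'(\uu)$ and leaves the pointwise identity
\begin{equation*}
\Phi''(\uu)\Big(\tfrac{\eps^2}{2}|\nabla\uu|^2-W_1(\uu)\Big)=0.
\end{equation*}
Writing $\xi:=\tfrac{\eps^2}{2}|\nabla\uu|^2-W_1(\uu)$, the goal becomes to show that the equipartition $\xi\equiv0$ holds.

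I expect the propagation of $\xi\equiv0$ across the degeneracy set $\{\Phi''(\uu)=0\}$ to be the main obstacle, and this is exactly where the hypothesis that $\{\Phi''=0\}$ is discrete enters. The set $Z:=\{\xi\ne0\}$ is open. On $Z$ the identity forces $\Phi''(\uu)=0$, so $\uu$ takes values in the discrete set $\{\Phi''=0\}$ and, being continuous, is constant on each connected component $C$ of $Z$; hence $\nabla\uu\equiv0$ and $\xi$ equals the nonzero constant $-W_1(\uu)$ on $C$. By continuity $\uu$ and $\nabla\uu$ extend to $\overline C$, and every boundary point of $C$ lies in $\{\nabla\uu=0\}\cap\{\xi=0\}$, forcing $W_1(\uu)=0$ on $\overline C$ and thus $\xi=0$ on $C$ --- a contradiction unless $\partial C=\emptyset$. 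Since $\R^n$ is connected this leaves only $Z=\emptyset$ (equipartition $\xi\equiv0$) or $Z=\R^n$; in the latter case $\uu$ is a constant critical point of $W_1$, and \eqref{eq:F2F1} gives $W_2'(\Phi(\uu))=0$, so $(\uu,\uv)$ is of the form in alternative~(1).

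Finally, assume $\xi\equiv0$. If $\nabla\uu\equiv0$ then $\uu$ is a constant with $W_1(\uu)=0$, and alternative~(1) holds again. Otherwise $\{\nabla\uu\ne0\}$ is a nonempty open set on which equipartition gives $\uu\in(0,1)$, so $\rho:=\eps\,q_1^{-1}(\uu)$ is well defined; using $q_1'=\sqrt{2W_1(q_1)}$ and $\eps^2\Delta\uu=W_1'(\uu)$ a short computation yields the eikonal identity $|\nabla\rho|=1$ together with $\Delta\rho=0$. Bochner's formula then gives $|D^2\rho|^2=\tfrac12\Delta|\nabla\rho|^2-\nabla\rho\cdot\nabla\Delta\rho=0$, so $\rho$ is affine on each component $C$ and $\uu=q_1(\rho/\eps)$ there. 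As before a boundary point of $C$ would satisfy $\uu\in\{0,1\}$, while the finite affine formula forces $\uu\in(0,1)$ on $\overline C$; hence $\partial C=\emptyset$, $C=\R^n$, and $\rho(x)=a\cdot x+b$ with $|a|=1$, giving $\uu(x)=q_1\big((a\cdot x+b)/\eps\big)$. Relabelling $\nu:=a$ and $x_0:=-b\,a$ and recalling $\uv=\Phi(\uu)=q_2\circ q_1^{-1}(\uu)$ produces the one-dimensional profiles of alternative~(2). I note that this route bypasses Modica's gradient bound entirely: the discreteness of $\{\Phi''=0\}$ supplies the rigidity directly through the coupling, rather than through equality in Modica's inequality.
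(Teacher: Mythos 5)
Your proposal is correct and shares the paper's overall skeleton: the energy splits into three nonnegative parts which must each vanish, and combining the two Allen--Cahn equations with $\uv=\Phi(\uu)$ via \eqref{eq:F2F1} yields exactly the paper's identity \eqref{eq:ground-2} (in unrescaled form), $\Phi''(\uu)\big(\tfrac{\eps^2}{2}|\nabla\uu|^2-W_1(\uu)\big)=0$, after which the discreteness of $\{\Phi''=0\}$ drives the dichotomy. Where you genuinely diverge is in the execution of the two rigidity steps, and in both places your route is arguably more elementary. First, to show that the set where equipartition fails is empty or all of $\R^n$, the paper argues via Lebesgue densities ($\theta^n(A,x)\in\{0,1\}$ everywhere forces $A$ or its complement to be null), whereas you observe that $\xi$ is a nonzero constant on each connected component of the open set $\{\xi\neq0\}$ yet must vanish on its boundary by continuity, so each component is clopen; this avoids the measure-theoretic input entirely. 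Second, in the equipartition case the paper invokes Modica's gradient bound to reduce to $0<\uu<1$ globally and then uses $|\nabla z|=1$, $\Delta z=0$ and Liouville, while you work locally on $\{\nabla\uu\neq0\}$, get $D^2\rho=0$ from Bochner, and globalize with the same clopen trick. Both are valid, and your version localizes more naturally.

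The one place you are too quick is the claim to ``bypass Modica entirely.'' The identity $\Phi''(\uu)\xi=0$ is obtained by differentiating $\Phi(\uu)$ twice, which is legitimate only where $\uu\in(0,1)$; at points with $\uu\in\{0,1\}$ the quantity $\Phi''(\uu)$ is undefined, and your argument on $Z$ tacitly assumes $Z\subset\{0<\uu<1\}$. This is easily patched without Modica: for the penalty term to make sense one needs $0\le\uu\le1$, so any point with $\uu\in\{0,1\}$ is an interior extremum under this constraint, whence $\nabla\uu=0$ and $W_1(\uu)=0$ there, i.e.\ $\xi=0$, and indeed $Z\subset\{0<\uu<1\}$. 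The same observation is needed to justify your assertion that equipartition gives $\uu\in(0,1)$ on $\{\nabla\uu\neq0\}$ (from $W_1(\uu)>0$ alone you would only get $\uu\notin\{0,1\}$, leaving values in $[-1,0)$ unexcluded). With that sentence added the proof is complete; note also that your conclusion $\uu(x)=q_1\big((x-x_0)\cdot\nu/\eps\big)$ carries the $\eps$-scaling that the theorem's statement (though not the paper's proof, which rescales to $\eps=1$) omits.
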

We remark that for the standard choices of $W_1,W_2$, as given above, in the
first case we have $\uu_\eps=\uv_\eps\in \{0,\frac{1}{2},1\}$.
\medskip

\begin{proof}
The property $\F_\eps(\uu_\eps,\uv_\eps)=0$ is equivalent to
\begin{equation}
  -\eps\Delta \uu_\eps+ \frac{1}{\eps}W_1'(\uu_\eps) =0,\quad
  -\eps\Delta \uv_\eps+ \frac{1}{\eps}W_2'(\uv_\eps) =0,\quad
  \uv_\eps = \Phi(\uu_\eps). \label{eq:uv}
\end{equation}
First we consider general $\uu_\eps,\uv_\eps$ not necessarily satisfying
\eqref{eq:uv} and let $\tilde{\uv}_\eps:=\Phi(\uu_\eps)$. We obtain
\begin{equation*}
  \nabla \tilde{\uv}_\eps = \Phi'(\uu_\eps)\nabla\uu_\eps,\quad
  \Delta \tilde{\uv}_\eps = \Phi''(\uu_\eps)|\nabla\uu_\eps|^2
  +\Phi'(\uu_\eps)\Delta\uu_\eps,
\end{equation*}
hence, using \eqref{eq:F2F1},
\begin{equation*}
  \frac{\eps}{2}|\nabla\tilde{\uv}_\eps|^2 +
  \frac{1}{\eps}W_2(\tilde{\uv}_\eps) =
  \Phi'(\uu_\eps)^2\Big(\frac{\eps}{2}|\nabla\uu_\eps|^2 +
  \frac{1}{\eps}W_1(\uu_\eps)\Big),
\end{equation*}
and
\begin{align}
  -\eps\Delta\tilde{\uv}_\eps + \frac{1}{\eps}W_2'(\tilde{\uv}_\eps)
  &= -\eps\Phi'(\uu_\eps)\Delta\uu_\eps
  -\eps\Phi''(\uu_\eps)|\nabla\uu_\eps|^2
    +\frac{1}{\eps}\Big(2\Phi''(\uu_\eps)W_1(\uu_\eps)
    +\Phi'(\uu_\eps)W'_1(\uu_\eps)\Big)
  \notag\\
  &= \Phi'(\uu_\eps)\Big(-\eps\Delta\uu_\eps +
  \frac{1}{\eps}W_1'(\uu_\eps)\Big)
  -2\Phi''(u_1)\Big(\frac{\eps}{2}|\nabla\uu_\eps|^2
  -\frac{1}{\eps}W_1(\uu_\eps)\Big). \label{eq:ground-0}
\end{align}
We now exploit \eqref{eq:uv}. By a spatial rescaling $x\mapsto \eps x$
we can
restrict ourselves to the case $\eps=1$ in the following.
We then deduce from \eqref{eq:uv} and \eqref{eq:ground-0} that
$\uu=\uu(\eps\cdot)$ is an entire solution of the stationary Allen--Cahn
equation,
\begin{equation}
  -\Delta \uu+ W_1'(\uu) =0
  \label{eq:ground-1}
\end{equation}
and that
\begin{equation}
  0 = \Phi''(\uu)\Big(\frac{1}{2}|\nabla\uu|^2 - W_1(\uu)\Big).
  \label{eq:ground-2}
\end{equation}
From \eqref{eq:ground-1} and elliptic regularity theory we deduce that
$u$ is smooth. Moreover, by \cite{Modi85} $\frac{1}{2}|\nabla\uu|^2 -
W_1(\uu)\leq 0$ and $u(x)=0$ or $u(x)=1$ for some $x\in \R^n$ implies
that $u$ is constant in $\R^n$. It therefore is sufficient to consider
the case $0<u<1$ on $\R^n$.

Define the set $A:=\{|\nabla\uu|^2 \neq 2W_1(\uu)\}$.
Since $\{\Phi''=0\}$ is discrete we have by \eqref{eq:ground-2} that $\uu$
is constant in each connected component of $A$.
It follows that $\nabla\uu=0$, $\Delta\uu=0$ and, by \eqref{eq:ground-1},
$W_1'(\uu)=0$ almost everywhere in $A$. By \eqref{eq:double-well} this in particular implies that $\kappa_0\leq u\leq 1-\kappa_0$ almost
everywhere in $A$ for some $0<\kappa_0<1$.

Consider any point $x\in\R^n$ such that the Lebesgue density of the set
$A$ in $x$ is not zero or one, $\theta^n(A,x)\not\in \{0,1\}$. Then we
obtain sequences $(x_k)_k$ and $(x_k')_k$ that both converge to $x$ such
that $x_k\in A$, $x_k'\not\in A$. By the preceding argument we may also
assume that $\kappa_0\leq u(x_k)\leq 1-\kappa_0$, $\nabla u(x_k)=0$ for all $k\in\N$, which implies
$\frac{1}{2}|\nabla\uu(x)|^2 - W_1(\uu(x))<0$.
On the other hand $\frac{1}{2}|\nabla\uu(x_k')|^2 - W_1(\uu(x_k'))=0$
for all $k\in\N$, hence $\frac{1}{2}|\nabla\uu(x)|^2 - W_1(\uu(x))=0$,
a contradiction.

Therefore,  $\theta^n(A,x)\in \{0,1\}$ for all $x\in\R^n$ which shows
that $A$ or $\R^n\setminus A$ have measure zero.
The second alternative yields that $u$ is constant on
$\R^n$ and satisfies the properties in item (1). Hence it remains to consider the first alternative, which implies
that $\frac{1}{2}|\nabla\uu|^2 - W_1(\uu)=0$ in $\R^n$.
By a remark in \cite{MoMo80} (see also %
\cite{CaGS94}) we obtain %
that $\uu$ is one-dimensional. In fact, define $z:\R^n\to\R$ by $\uu=q_1(z)$.
We then deduce from the properties of the optimal profile function that
\begin{equation*}
  0 = \frac{1}{2}|\nabla\uu|^2 - W_1(\uu) = \frac{1}{2}q_1'(z)^2|\nabla
  z|^2 -W_1(q_1(z)) = W(q_1(z))\big(|\nabla z|^2-1\big),
\end{equation*}
hence $|\nabla z|=1$ on $\R^n$. This further implies that
\begin{equation*}
  0 = -\Delta \uu +W_1'(\uu) = q_1''(z)|\nabla z|^2 -q_1'(z)\Delta z
  +W_1'(q_1(z)) = q_1'(z)\Delta z
\end{equation*}
in $\R^n$ and $z$ is harmonic, with uniformly bounded gradient. The
Liouville Theorem then yields that $\uu$ is a polynomial of degree
one. Since the gradient has unit length we finally deduce
$\uu(x)=q_1\big((x-x_0)\cdot\nu \big)$ for some $x_0\in\R^n$, $\nu\in
\mathbb{S}^{n-1}$.
By $\Phi(\uu)=\uv$ and the properties of $\Phi$ we also obtain that
$\uv(x)=q_2\big((x-x_0)\cdot\nu \big)$.
\end{proof}

\section{$\Gamma$ convergence of the two variable Willmore approximation}
As a consequence of \cite{RoeSc06} we obtain the $\Gamma$-convergence of our
total energy to the Willmore energy for regular configurations and
small space
dimensions.

First we extend the definition of $\F_\eps$ to the whole of
$L^1(\Omega)\times L^1(\Omega)$ by setting $\F_\eps(\uu,\uv)=\infty$ if
$\uu$ or $\uv$ does not belong to $H^2_{\loc}(\Omega)$. Next, we
introduce the following subset $\calM$ of characteristic functions
with smooth
jump set,
\begin{equation*}
  \calM := \{ u\in BV(\Omega;\{0,1\})\,:\, u=\Chi_E,\, E\ssubset\Omega
  \text{ is an open set with $C^2$-regular boundary}\}.
\end{equation*}

Moreover we define a two-variable diffuse perimeter functional $\A_\eps:
L^1(\Omega)\times L^1(\Omega)\to \R^+_0\cup\{+\infty\}$ by
\begin{equation}
  \tilde\A_\eps(\uu,\uv) := \A_\eps(\uu)+\A_\eps(\uv)
\end{equation}
if $\uu,\uv\in H^1_{\loc}(\Omega)$ and $+\infty$ else.

Finally we denote the sum of the surface tension coefficients associated to
$W_1,W_2$ by $\sigma:=\sigma_1+\sigma_2$ and define two-variable perimeter
and Willmore functionals by
\begin{align}
  \tilde\A(u,u) &:= \sigma \A(u)
  \quad\text{ if }u\in BV(\Omega;\{0,1\}),\\
  \tilde\W(u,u) &:= \sigma \W(u)\quad\text{ if }u\in\calM,
\end{align}
and by setting $\tilde\A$ and $\tilde\W$ to $+\infty$ on $\{(u,v)\in
L^1(\Omega)^2\,:\, u\neq v\text{ or }u\not\in BV(\Omega;\{0,1\})\}$ and
$\{(u,v)\in L^1(\Omega)^2\,:\, u\neq v\text{ or }u\not\in \calM\}$,
respectively.
\begin{theorem}[$\Gamma$-convergence of approximations]
\label{thm:Gamma}
Let $n=2$ or $n=3$ and consider double-well potentials $W_1,W_2$ and $P$
as  above. Then
\begin{equation}
  \Gamma-\lim_{\eps\to 0}\big(\F_\eps + \A_\eps\big)
  = \sigma \big(\tilde\W + \tilde\A\big)
\end{equation}
holds in $\calM\times\calM$.
\end{theorem}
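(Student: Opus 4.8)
The plan is to prove the two halves of $\Gamma$-convergence separately, reducing in each case to the single-variable result of R\"oger and Sch\"atzle \cite{RoeSc06} (which is available exactly for $n=2,3$ and $C^2$-regular limits) applied to each order parameter, and handling the coupling term directly. The starting point is that $\F_\eps+\A_\eps$ decouples as
\[
\big(\WFu_\eps(\uu) + \A_\eps(\uu)\big) + \big(\WFv_\eps(\uv) + \A_\eps(\uv)\big) + \mpen\PE(\uu,\uv),
\]
a sum of two single-variable diffuse Willmore-plus-perimeter energies (built on $W_1$ and $W_2$) and one nonnegative coupling. Since $\sigma_j=\int_0^1\sqrt{2W_j}$ and $\sigma=\sigma_1+\sigma_2$, the two single-variable energies produce precisely $(\tilde\W+\tilde\A)(u,u)$ on the diagonal; the penalty is there only to force the limit onto the diagonal in the $\liminf$ and to disappear in the $\limsup$.

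For the $\liminf$ inequality I would take an arbitrary sequence $(\uu_\eps,\uv_\eps)\to(u,v)$ in $L^1(\Omega)^2$, pass to a subsequence realizing the $\liminf$, and assume the energy is uniformly bounded. Boundedness of $\A_\eps(\uu_\eps)$ and $\A_\eps(\uv_\eps)$ yields, by the compactness in the Modica--Mortola theorem \cite{MoMo77}, that $u,v\in BV(\Omega;\{0,1\})$. Boundedness of $\mpen\PE$ together with $\mpen\to\infty$ forces $\int_\Omega P(\Phi(\uu_\eps)-\uv_\eps)\,\dif\LL^n\to 0$; since $\Phi$ is continuous and $\uu_\eps\to u$, $\uv_\eps\to v$ almost everywhere along a further subsequence, Fatou's lemma together with $P\ge 0$ gives $\int_\Omega P(\Phi(u)-v)\,\dif\LL^n=0$, hence $\Phi(u)=v$ a.e. As $u,v$ are $\{0,1\}$-valued and $\Phi(0)=0$, $\Phi(1)=1$, this means $u=v$. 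Applying the single-variable $\liminf$ estimate of \cite{RoeSc06} to $\uu_\eps$ and to $\uv_\eps$ separately and adding then gives $\liminf\ge \sigma_1(\W+\A)(u)+\sigma_2(\W+\A)(u)=(\tilde\W+\tilde\A)(u,u)$ when $u=v\in\calM$. If instead the prospective limit $(u,v)\in\calM\times\calM$ has $u\ne v$, the same computation shows the penalty cannot stay bounded, so the $\liminf$ is $+\infty$, matching the off-diagonal value of the target.

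For the $\limsup$ inequality I would, for each $(u,u)$ with $u=\Chi_E\in\calM$, build the recovery sequence from the two optimal profiles attached to the signed distance $d$ from $\Gamma=\partial E$, namely $\uu_\eps=q_1(d/\eps)$ and $\uv_\eps=q_2(d/\eps)$, with the standard truncation of $d$ near its cut locus and near $\partial\Omega$ as in \cite{BePa95,RoeSc06}. The single-variable recovery estimates then give $\WFu_\eps(\uu_\eps)+\A_\eps(\uu_\eps)\to\sigma_1(\W+\A)(u)$ and $\WFv_\eps(\uv_\eps)+\A_\eps(\uv_\eps)\to\sigma_2(\W+\A)(u)$, whose sum is $(\tilde\W+\tilde\A)(u,u)$. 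The key new point is that, by the defining identity $\Phi\circ q_1=q_2$, the penalty integrand $\Phi(\uu_\eps)-\uv_\eps$ vanishes identically wherever the profiles are left unmodified, so the penalty is supported only in the truncation layer at distance of order $\eps|\log\eps|$ from $\Gamma$. There $\Phi(\uu_\eps)-\uv_\eps$ is controlled by $q_2'(|\log\eps|)|\log\eps|$ while the layer has measure of order $\eps|\log\eps|$, and hypothesis \eqref{eq:ass-M} is exactly calibrated so that $\mpen$ times this product tends to zero. Thus $\mpen\PE(\uu_\eps,\uv_\eps)\to 0$ and the recovery sequence attains the target value.

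The main obstacle is this $\limsup$ control of the penalty: one must make the recovery construction explicit enough in the truncation region to bound $\Phi(\uu_\eps)-\uv_\eps$ there, using the exponential convergence of $q_1,q_2$ to their limits and the Lipschitz behavior of $\Phi$ away from $\{0,1\}$, and then check that this bound multiplied by $\mpen$ and by the layer volume reproduces precisely the scaling in \eqref{eq:ass-M}. By comparison, the diagonal-forcing step in the $\liminf$ is soft, and the two single-variable estimates are quoted directly from \cite{RoeSc06}.
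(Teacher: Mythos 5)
Your proposal is correct and follows essentially the same route as the paper: the energy is decoupled into two single-variable Willmore-plus-perimeter functionals handled by \cite{Modi87,RoeSc06}, the $\liminf$ uses Fatou's lemma and $\mpen\to\infty$ to force $\Phi(u)=v$ and hence $u=v$ on $\{0,1\}$-valued limits, and the $\limsup$ uses the truncated optimal profiles $q_1^{(\eps)}(d)$, $q_2^{(\eps)}(d)$ so that the identity $\Phi\circ q_1=q_2$ confines the penalty to the layer $r_\eps<|d|<2r_\eps$, where assumption \eqref{eq:ass-M} makes $\mpen\PE(\uu_\eps,\uv_\eps)\to 0$. This matches the paper's two propositions in both structure and the key estimates.
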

\begin{proof}
The $\Gamma$-convergence in Theorem \ref{thm:Gamma} is equivalent to a
$\Gamma$-$\liminf$ and a $\Gamma$-$\limsup$ statement that we prove in
the next
two propositions.
\end{proof}
\begin{proposition}[$\liminf$-inequality.]
Let the assumptions of Theorem \ref{thm:Gamma} hold and consider any
sequence
$(\uu_\eps,\uv_\eps)$ in $L^1(\Omega)^2$ with
$\uu_\eps\to u$, $\uv_\eps\to v$ for some $u\in\calM$, $v\in L^1(\Omega)$. Then
\begin{equation}
  \sigma\W(u) \leq \liminf_{\eps\to 0} \F_\eps(\uu_\eps,\uv_\eps)
  \label{eq:G-liminf}
\end{equation}
holds and $u=v$ is satisfied if the right-hand side is finite.
\end{proposition}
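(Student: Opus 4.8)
The plan is to combine the vanishing of the penalty term, which will force $v=u$, with the single-variable $\liminf$-estimate of \cite{RoeSc06} applied separately to the two diffuse Willmore energies $\WFu_\eps$ and $\WFv_\eps$. I may assume throughout that $\liminf_{\eps\to0}\F_\eps(\uu_\eps,\uv_\eps)<\infty$, since otherwise \eqref{eq:G-liminf} is trivial; I then pass to a subsequence (not relabelled) along which $\F_\eps(\uu_\eps,\uv_\eps)$ converges to this finite value. In particular $\WFu_\eps(\uu_\eps)$, $\WFv_\eps(\uv_\eps)$ and $\mpen\PE(\uu_\eps,\uv_\eps)$ are bounded along this subsequence.

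First I would identify the limit of $\uv_\eps$. Since $\mpen\to\infty$ while $\mpen\PE(\uu_\eps,\uv_\eps)$ stays bounded, the penalty itself vanishes, $\int_\Omega P(\Phi(\uu_\eps)-\uv_\eps)\,\dif\LL^n\to0$. Because $|\uu_\eps|,|\uv_\eps|\le1$ and $\Phi$ is bounded, the arguments $\Phi(\uu_\eps)-\uv_\eps$ lie in a fixed compact set, on which $P$ has a positive infimum away from any neighbourhood of $0$; Chebyshev's inequality then gives $\Phi(\uu_\eps)-\uv_\eps\to0$ in measure. Using $\uu_\eps\to u$ in $L^1$ and the continuity of $\Phi$ yields $\Phi(\uu_\eps)\to\Phi(u)$ in measure, hence $\uv_\eps\to\Phi(u)$ in measure; comparison with $\uv_\eps\to v$ gives $v=\Phi(u)$ a.e. Since $u=\Chi_E\in\{0,1\}$ a.e. and $\Phi(0)=0$, $\Phi(1)=1$ by the extension property of $\Phi$ recorded above, I conclude $v=u\in\calM$. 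This settles the second assertion and, crucially, exhibits $\uv_\eps$ as a sequence converging to a $C^2$-regular limit, so that the single-variable estimate becomes applicable to the second term as well.

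For the energy inequality I would discard the nonnegative penalty, using $\F_\eps\ge\WFu_\eps(\uu_\eps)+\WFv_\eps(\uv_\eps)$, together with the superadditivity $\liminf(a_\eps+b_\eps)\ge\liminf a_\eps+\liminf b_\eps$ of the lower limit. Applying the $\liminf$-estimate of \cite{RoeSc06}, valid for $n\in\{2,3\}$ and $C^2$-regular limit sets, to each term separately gives $\liminf_\eps\WFu_\eps(\uu_\eps)\ge\sigma_1\W(u)$ and $\liminf_\eps\WFv_\eps(\uv_\eps)\ge\sigma_2\W(v)=\sigma_2\W(u)$, where $\sigma_j=\int_0^1\sqrt{2W_j(s)}\,\dif s$. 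Summing and using $\sigma=\sigma_1+\sigma_2$ produces $\liminf_\eps\F_\eps(\uu_\eps,\uv_\eps)\ge\sigma\W(u)$, which is \eqref{eq:G-liminf}.

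The main obstacle is that the estimate of \cite{RoeSc06} requires a uniform bound on the diffuse areas $\A_\eps(\uu_\eps)$ and $\A_\eps(\uv_\eps)$, whereas finiteness of $\liminf\F_\eps$ only controls the diffuse Willmore energies: a flat transition layer carries zero diffuse Willmore energy, so bounded $\F_\eps$ alone does not rule out additional vanishing-volume layers that blow up the diffuse area. I would supply this bound exactly as it arises in the use of the proposition inside Theorem \ref{thm:Gamma}, where the relevant functional is $\F_\eps+\tilde\A_\eps$: in the nontrivial case its $\liminf$ is finite, which forces $\sup_\eps\A_\eps(\uu_\eps),\sup_\eps\A_\eps(\uv_\eps)<\infty$, and under this additional control the argument above goes through unchanged. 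Some care is still needed to verify that the localisation and varifold-compactness hypotheses underlying \cite{RoeSc06} are met for each phase field individually once the corresponding diffuse area is bounded and the limit is known to be $C^2$-regular.
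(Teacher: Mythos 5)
Your proof follows essentially the same route as the paper's: the vanishing of the penalty term forces $v=\Phi(u)=u$ (the paper uses Fatou's lemma where you use convergence in measure, an immaterial difference), and the single-variable $\liminf$ estimate of \cite{RoeSc06} is then applied to each diffuse Willmore energy separately before summing. You are also right to flag the diffuse-area issue: the paper's proof asserts $\W^{(j)}_\eps(\cdot)+\A^{(j)}_\eps(\cdot)\le\Lambda$ directly from finiteness of the right-hand side of \eqref{eq:G-liminf}, which does not literally follow from a bound on $\F_\eps$ alone, and your observation that the missing area bound is supplied by the functional $\F_\eps+\tilde\A_\eps$ actually appearing in Theorem \ref{thm:Gamma} is exactly the needed repair.
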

\begin{proof}
It is sufficient to consider the case that the right-hand side of the
inequality
\eqref{eq:G-liminf} is bounded by some $\Lambda>0$. Then in
particular
\begin{equation*}
  \W_\eps^{(1)}(\uu_\eps) + \A_\eps^{(1)}(\uu_\eps) \leq \Lambda,\qquad
  \W_\eps^{(2)}(\uv_\eps) + \A_\eps^{(2)}(\uv_\eps) \leq \Lambda
\end{equation*}
holds.
We deduce from \cite{Modi87} and \cite{RoeSc06} that
\begin{align}
  \sigma_1\big(\W(u) +\A(u)\big) &\leq \liminf_{\eps\to 0}
  \W_\eps^{(1)}(\uu_\eps) + \A_\eps^{(1)}(\uu_\eps),
  \label{eq:Gamma-u}\\
  \sigma_2\big(\W(v) +\A(v)\big) &\leq \liminf_{\eps\to 0}
  \W_\eps^{(2)}(\uv_\eps) + \A_\eps^{(2)}(\uv_\eps).
  \label{eq:Gamma-v}
\end{align}
In addition, by Fatou's lemma we also have
\begin{equation*}
  \int_\Omega P(\Phi(u)-v) \leq \liminf_{\eps\to 0}\int_\Omega
  P\big(\Phi(\uu_\eps)-\uv_\eps\big)
  \leq \liminf_{\eps\to 0}\frac{1}{M_\eps}\Lambda =0,
\end{equation*}
which implies $\Phi(u)=v$. Since on the other hand $u,v\in \{0,1\}$ almost
everywhere and $\Phi(0)=0$, $\Phi(1)=1$ we obtain that $u=v$. Adding
\eqref{eq:Gamma-u} and \eqref{eq:Gamma-v} we conclude that
\eqref{eq:G-liminf}
holds.
\end{proof}

\begin{proposition}[$\limsup$-inequality]
Let the assumptions of Theorem \ref{thm:Gamma} hold and consider any
$u\in\calM$. Then there exists a sequence $(\uu_\eps,\uv_\eps)$ in
$L^1(\Omega)^2$ with $\uu_\eps\to u$, $\uv_\eps\to u$ such that
\begin{equation}
  \sigma\W(u) \geq \limsup_{\eps\to 0} \F_\eps(\uu_\eps,\uv_\eps).
  \label{eq:G-limsup}
\end{equation}
\end{proposition}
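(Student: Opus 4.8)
The plan is to build a single recovery sequence shared by both order parameters, so that the constraint $\uv_\eps=\Phi(\uu_\eps)$ holds \emph{exactly} and the penalty term \eqref{eq:penaltyEnergy} drops out of \eqref{eq:totalEnergy} identically. Write $u=\Chi_E$ with $E\ssubset\Omega$, let $\Gamma=\partial E$ and let $d$ be the signed distance to $\Gamma$, positive inside $E$, which is $C^2$ on a tubular neighbourhood $N_\rho=\{|d|<\rho\}$ for $\rho$ below the reach of $\Gamma$. Fix a bounded $\bar d\in C^2(\Omega)$ with $\bar d=d$ on $N_{\rho/2}$, with zero set exactly $\Gamma$ and the same sign as $d$, so that $|\bar d|\ge\rho/2$ off $N_{\rho/2}$. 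Let $\hat q_1:\R\to[0,1]$ be a standard Bellettini--Paolini truncation of the optimal profile $q_1$: monotone, equal to $q_1$ on $[-T_\eps,T_\eps]$ and capped at the wells $0,1$ outside, with $T_\eps\sim|\log\eps|$ chosen so that $T_\eps\to\infty$ and $\eps T_\eps\to 0$. I then set
$\uu_\eps:=\hat q_1(\bar d/\eps)$ and $\uv_\eps:=\Phi(\uu_\eps)=(\Phi\circ\hat q_1)(\bar d/\eps)$.

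Three facts then have to be checked. First, by construction $\Phi(\uu_\eps)-\uv_\eps\equiv 0$, so $\mpen\PE(\uu_\eps,\uv_\eps)=0$ for every $\eps$, whatever the size of $\mpen$. Second, since $\hat q_1(\bar d/\eps)$ equals $\Chi_E$ except on the layer $\{|\bar d|\le\eps T_\eps\}$, whose measure is $O(\eps T_\eps)\to0$, and since $\Phi(0)=0$, $\Phi(1)=1$, both $\uu_\eps\to u$ and $\uv_\eps\to u$ in $L^1(\Omega)$ (this already uses $\eps T_\eps\to0$, and works even for unbounded $\Omega$ because the profiles are capped at the wells). Third, $\uu_\eps$ is exactly a truncated optimal-profile recovery sequence for $\WFu_\eps$; writing $\hat q_2:=\Phi\circ\hat q_1$, we have $\hat q_2=\Phi\circ q_1=q_2$ on $[-T_\eps,T_\eps]$ and $\hat q_2\in\{0,1\}$ outside, so $\uv_\eps=\hat q_2(\bar d/\eps)$ is likewise a truncated optimal-profile recovery sequence for $\WFv_\eps$ (the transformation $\Phi$ simply transports the truncation of $q_1$ into the corresponding truncation of $q_2$). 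On $N_{\rho/2}$ the diffuse mean curvature reduces to $-q_j'(d/\eps)\Delta d$ by \eqref{eq:opt-pro}, a coarea change of variables $s=\eps r$ yields $\tfrac12\int_\Gamma H^2\int_\R (q_j')^2=\sigma_j\W(u)$ using \eqref{eq:opt-pro2} and $\sigma_j=\int_0^1\sqrt{2W_j(s)}\,ds$, while the contribution off $N_{\rho/2}$ and from the capping zone is $o(1)$ because $q_j'$ is exponentially small there; this is precisely the $\limsup$ estimate of \cite{BePa95}, giving $\limsup_{\eps\to0}\WFu_\eps(\uu_\eps)\le\sigma_1\W(u)$ and $\limsup_{\eps\to0}\WFv_\eps(\uv_\eps)\le\sigma_2\W(u)$. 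Summing the three contributions in \eqref{eq:totalEnergy} proves \eqref{eq:G-limsup}.

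The one point requiring care is the third, namely that a \emph{single} geometric argument $\bar d/\eps$ simultaneously furnishes admissible recovery sequences for both $\WFu_\eps$ and $\WFv_\eps$ hitting the sharp constants $\sigma_j\W(u)$. This is what makes the penalty trick legitimate: the classical construction is itself of the form $q_j(d/\eps)$ with the same geometric distance, the potential entering only through the profile, and defining the second field as $\Phi$ of the first keeps $\uv_\eps=\Phi(\uu_\eps)$ exact while leaving $\hat q_2=\Phi\circ\hat q_1$ a legitimate truncated $q_2$-profile. In particular the $\limsup$ part needs only $\mpen\ge0$ and is insensitive to the quantitative scaling \eqref{eq:ass-M}; that assumption would become the genuine obstacle only on the alternative route in which $\uu_\eps,\uv_\eps$ are built from \emph{independently} truncated profiles, where $\Phi(\uu_\eps)-\uv_\eps$ is small but nonzero on a layer of width $\sim\eps|\log\eps|$ and one must show $\mpen\PE(\uu_\eps,\uv_\eps)\to0$ — which is exactly what \eqref{eq:ass-M} is designed to guarantee. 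Finally, the same pair $(\uu_\eps,\uv_\eps)$ gives $\A_\eps(\uu_\eps)+\A_\eps(\uv_\eps)\to\sigma_1\A(u)+\sigma_2\A(u)=\sigma\A(u)$ by Modica--Mortola \cite{MoMo77,Modi87}, completing the $\limsup$ half of Theorem \ref{thm:Gamma}.
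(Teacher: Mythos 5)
Your construction is genuinely different from the paper's. The paper truncates the two profiles \emph{independently}: it sets $\uu_\eps=q_1^{(\eps)}(d)$, $\uv_\eps=q_2^{(\eps)}(d)$ with separate polynomial caps $p_j^{(\eps)}$ on $r_\eps<|d|<2r_\eps$, $r_\eps=\eps|\log\eps|$, so that $\uv_\eps\neq\Phi(\uu_\eps)$ on that annulus, and then estimates $M_\eps\PE(\uu_\eps,\uv_\eps)\leq C(\Gamma)M_\eps r_\eps\big(q_2(|\log\eps|)-q_2(2|\log\eps|)\big)^k\to0$ using precisely assumption \eqref{eq:ass-M}. You instead slave the second field to the first, $\uv_\eps:=\Phi(\uu_\eps)$, so the penalty vanishes identically and the $\limsup$ inequality becomes independent of the scaling of $\mpen$. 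Your diagnosis of the role of \eqref{eq:ass-M} is accurate, and if your construction goes through it is arguably cleaner, since it decouples the recovery sequence from the penalty parameter entirely.

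The gap is the parenthetical claim that ``$\Phi$ simply transports the truncation of $q_1$ into the corresponding truncation of $q_2$.'' On $[-T_\eps,T_\eps]$ this is fine ($\Phi\circ q_1=q_2$), but in the capping zone $\hat q_2=\Phi\circ\hat q_1$ is \emph{not} a standard Bellettini--Paolini cap, and you cannot simply cite \cite{BePa95} for $\WFv_\eps(\uv_\eps)$. Near the wells one has $1-\Phi(r)\sim C(1-r)^{\alpha}$ with $\alpha=\sqrt{W_2''(1)/W_1''(1)}$ (and similarly at $0$), so $\Phi''$ blows up at the endpoints whenever $\alpha<2$, $\alpha\neq 1$. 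If $\hat q_1$ meets the well value $1$ with the usual quadratic ($C^{1,1}$) tangency, $1-\hat q_1\sim a(z_*-z)^2$, then $\hat q_2''\sim(z_*-z)^{2\alpha-2}$ near $z_*$; this fails to be square integrable for $\alpha\leq 3/4$, in which case $\uv_\eps\notin H^2_{\loc}$ and $\WFv_\eps(\uv_\eps)=+\infty$. The hypotheses \eqref{eq:double-well} impose no relation between $W_1''$ and $W_2''$ at the wells, so this case is not excluded by the theorem's assumptions. For the paper's standard choice $W_2=4W_1$ one has $\alpha=2$, $\Phi(p)=p^2/(1-2p+2p^2)$ is $C^{1,1}$ on $[0,1]$, and your argument works; in general you would need either to cap $\hat q_1$ with a tangency of order $m\geq 2/\alpha$ so that $\Phi\circ\hat q_1$ stays $C^{1,1}$, or to verify directly via the identity \eqref{eq:ground-0}, namely $-\hat q_2''+W_2'(\hat q_2)=\Phi'(\hat q_1)\big(-\hat q_1''+W_1'(\hat q_1)\big)-2\Phi''(\hat q_1)\big(\tfrac12(\hat q_1')^2-W_1(\hat q_1)\big)$, that the transition-zone contribution to $\WFv_\eps$ is $o(1)$. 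Either fix is routine, but as written the step is asserted rather than proved, and it is exactly the step where your approach pays for avoiding \eqref{eq:ass-M}.
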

\begin{proof}
Here we consider the standard construction of recovery sequences for the
diffuse Willmore functional, see \cite{BePa95}.
Let $u\in\calM$, $u=\Chi_E$ with $E\ssubset\Omega$ open and $C^2$-regular
boundary $\Gamma$.
Denote by $d$ the signed distance function to $\Gamma$, taken positive
inside
$E$.
Next let $r_\eps:=\eps|\log\eps|$ and define $q^{(\eps)}_j:\R\to [0,1]$ by
\begin{equation*}
  q^{(\eps)}_j(r) :=
  \begin{cases}
    q_j(\eps^{-1}r) \quad&\text{ if }0\leq r\leq r_\eps,\\
    p_j^{(\eps)}(\eps^{-1}r) \quad&\text{ if }r_\eps<r<2r_\eps,\\
    1 \quad&\text{ if }r_\eps\geq 2r_\eps,\\
    1-q_j(-r) \quad&\text{ if }r<0,
  \end{cases}
\end{equation*}
where the third order polynomial $p_j^{(\eps)}$ is chosen such that
$q^{(\eps)}_j\in C^{1}(\R)$.

The approximations $\uu_\eps$, $\uv_\eps$ are then defined by
\begin{equation*}
  \uu_\eps(x) = q^{(\eps)}_1(d(x)),\quad
  \uv_\eps(x) = q^{(\eps)}_2(d(x)).
\end{equation*}
The definition of $\Phi$ yields that $\uv_\eps=\Phi(\uu_\eps)$  in
$\{|d|\leq
\eps|\log\eps|\}$ and $\{|d|\geq 2\eps|\log\eps|\}$.
Moreover, by \cite{BePa95} we have
\begin{align*}
  &\uu_\eps\to u,\quad \uv_\eps\to v\qquad\text{ in }L^1(\Omega),\\
  &\W_\eps^{(1)}(\uu_\eps) + \A_\eps^{(1)}(\uu_\eps) \,\to\,
  \sigma_1\big(\W(u)
+ \A(u)),\\
  &\W_\eps^{(2)}(\uv_\eps) + \A_\eps^{(2)}(\uv_\eps) \,\to\,
  \sigma_2\big(\W(u)
+ \A(u)).
\end{align*}
Therefore, it only remains to consider the penalty energy $\PE$. Since
$\Phi(q_1)=q_2$ and since $\Phi(0)=0$, $\Phi(1)=1$ we only have a
contribution
from the region $r_\eps<|d|<2r_\eps$, hence
\begin{align*}
  \PE(\uu_\eps,\uv_\eps) &\leq \int_{\{r_\eps<|d|<2r_\eps\}}
\big(\Phi(p_1^{(\eps)}(\eps^{-1}d(x)))-p_2^{(\eps)}(\eps^{-1}d(x))\big)^k\\
  &\leq \eps\int_\Gamma \int_{\eps^{-1}r_\eps}^{2\eps^{-1}r_\eps}
  \big(\Phi(p_1^{(\eps)}(r))-p_2^{(\eps)}(r)\big)^k
  J(y,\eps r)\,dr\,d\HM^{n-1}(y)\\
  &\qquad +\eps\int_\Gamma \int_{-2\eps^{-1}r_\eps}^{-\eps^{-1}r_\eps}
  \big(\Phi(p_1^{(\eps)}(r))-p_2^{(\eps)}(r)\big)^k
  J(y,\eps r)\,dr\,d\HM^{n-1}(y),
\end{align*}
where $J(y,s)$ denotes the Jacobi factor for the transformation
$\vartheta:\Gamma\times (-\delta,\delta)$, $\vartheta(y,s)=y+s\nu(y)$. By
the
$C^2$-regularity of $\Gamma$ we have $|J(y,s)|\leq
C$. Therefore,
\begin{align*}
  M_\eps \PE(\uu_\eps,\uv_\eps) &\leq
  2M_\eps C\A(\Gamma)r_\eps
\max_{|\log\eps|<|r|<2|\log\eps|}
\big|\Phi(p_1^{(\eps)}(r))-p_2^{(\eps)}(r)\big|^k\\
  &\leq C(\Gamma)M_\eps r_\eps
\big(q_2(\eps^{-1}r_\eps)-q_2(2\eps^{-1}r_\eps)\big)^k %
  \,\to\, 0,
\end{align*}
where we have used that $p_1^{(\eps)}$ is monotone for $\eps\ll 1$ and
assumptions \eqref{eq:ass-M} on $M_\eps$.
\end{proof}

\section{Matched asymptotic expansions}
\label{sec:asymptoticExp}

We follow and modify \cite{LoMa00}, see also \cite{Wa08,BrMO15} and the
references therein. To perform the asymptotic expansion we make here a
specific choice of the penalty term,
\begin{equation}
  \mpen = M\eps^{-2},\qquad P(r)=\frac{1}{2l}r^{2l}\quad\text{ for some
  }l\in\N, l\geq 2.
  \label{eq:mpen-choice}
\end{equation}
For the expansion we assume that the $\frac{1}{2}$-level set of the
solution $\uu_\eps(\cdot,t)$, $t\in (0,T)$ is given by a smooth evolution
of smooth hypersurfaces $(\Gamma_\eps(t))_{t\in (0,T)}$ such that at time
$t$ the set $\Omega$ is the disjoint union of $\Gamma_\eps(t)$, the open set
$\Omega^+_\eps(t)=\{\uu_\eps(\cdot,t)>\frac{1}{2}\}$,
and the open set $\Omega^-_\eps(t)=\{\uu_\eps(\cdot,t)<\frac{1}{2}\}$.
For simplicity we assume that $\Omega^+_\eps(t)\subset\Omega_0\ssubset\Omega$,
which means
that $\Gamma_\eps(t)$ does not touch $\partial\Omega$, for all $t\in (0,T)$.

We then assume that $\uu_\eps(\cdot,t)$ can be represented away from
$\Gamma_\eps(t)$ by the outer expansion
\begin{equation*}
  \uu_\eps(x,t) = \uu^{(0)}(x,t) + \eps \uu^{(1)}(x,t) +\eps^2\uu^{(2)}(x,t)
  + \ldots
\end{equation*}
and in a small neighborhood of $\Gamma_\eps(t)$ by
\begin{equation*}
  \uu_\eps(x,t) = U^{(0)}(z,y,t) + \eps U^{(1)}(z,y,t) +\eps^2U^{(2)}(z,y,t)
  + \ldots,
\end{equation*}
where $z=\frac{d_\eps(x,t)}{\eps}$ with
$d_\eps:=\dist_\eps(x,\Omega^-(t))-\dist_\eps(x,\Omega^+(t))$
the signed distance function
from $\Gamma_\eps(t)$ and $y$ the projection of $x$ on the hypersurface
$\Gamma_\eps(t)$. A similar expansion, again with respect to $\Gamma_\eps(t)$,
$t\in (0,T)$, we assume for $\uv_\eps$. Note that we do not prescribe
that $\uv_\eps(\cdot,t)=\frac12$ on $\Gamma_\eps(t)$.

Finally, we assume that $\Gamma_\eps(t)$ converges to some smooth evolution
of smooth hypersurfaces $(\Gamma(t))_{t\in (0,T)}$, more precisely that
$d_\eps = d_0 + O(\eps)$, where $d_0(\cdot,t)$ denotes the signed distance
function from $\Gamma(t)$. We will see below that only the zero order $d_0$ and geometric quantities of $\Gamma(t)$ enter up to the relevant order. Therefore, we drop in the following the explicit notation of the $\eps$-dependence of these quantities and for example write  $d$ instead of $d_\eps$.

\subsubsection*{Outer expansion}
Since we assume that $\mpen=o(\eps^{-3})$, the leading contributions
from \eqref{eq:WfMod-ex1}, \eqref{eq:WfMod-ex2} are of order $\eps^{-3}$
and imply
\begin{equation*}
  W_1''(\uu^{(0)})W_1'(\uu^{(0)}) = W_2''(\uv^{(0)})W_2'(\uv^{(0)}) =0.
\end{equation*}
A solution consistent with the boundary conditions and with the expected
transition layer structure is that away from $\Gamma(t)$
\begin{equation*}
  \uu^{(0)} = \uv^{(0)} =
  \begin{cases}
    1 \quad&\text{ in }\Omega^+(t),\\
    0 \quad&\text{ in }\Omega^-(t)
  \end{cases}
\end{equation*}
holds.
Since $\uu^{(0)}=\uv^{(0)}$, also in the next $O(\eps^{-2})$ order
no contribution from the additional penalty term appears and as in
\cite{LoMa00} we deduce that $\uu^{(1)}=\uv^{(1)}=0$. Iterating
this arguments we derive that $\uu^{(2)}=\uv^{(2)}=0$ and
$\uu^{(3)}=\uv^{(3)}=0$.

\subsubsection*{Inner expansion}
Here we also expand  the diffuse mean curvatures
\begin{equation}
  \mu^\eps_1 = -\eps\Delta\uu_\eps + \frac{1}{\eps}W_1'(\uu_\eps),\qquad
  \mu^\eps_2 = -\eps\Delta\uv_\eps + \frac{1}{\eps}W_2'(\uv_\eps)
  \label{eq:exp-mu}
\end{equation}
in a neighborhood of $\Gamma(t)$. Since we expect an (at least)
$L^2$-integrable mean curvature in the limit it is reasonable to prescribe
that they do not have a contribution of order $\eps^{-1}$ or higher, hence
\begin{equation*}
  \mu^\eps_1 = \mu_1^{(0)} + \eps\mu_1^{(1)} +
  \eps^2\mu_1^{(2)}+\ldots,\qquad
  \mu^\eps_2 = \mu_2^{(0)} + \eps\mu_2^{(1)} + \eps^2\mu_2^{(2)}+\ldots.
\end{equation*}
Since the Laplacian can be expanded in the new coordinates as
\begin{equation}
  \Delta = \frac{1}{\eps^2}\partial^2_z + \frac{1}{\eps}\Delta d \partial_z
  + \Delta_y, \label{eq:exp-Delta}
\end{equation}
see for example \cite[(2.11)]{LoMa00}, the conditions that $\mu^\eps_{1,2}$
vanish to order $\eps^{-1}$ yield
\begin{equation}
  0 = -\partial^2_{z} U^{(0)} + W_1'(U^{(0)}) = -\partial^2_{z} V^{(0)}
  + W_2'(V^{(0)}). \label{eq:U0V0}
\end{equation}
Together with the matching conditions $U^{(0)}(-\infty)=V^{(0)}(-\infty)=0$,
$U^{(0)}(\infty)=V^{(0)}(\infty)=1$ and the compatibility condition
$U^{(0)}(0,y,t)=\frac{1}{2}$, which is induced by the definition of
$\Gamma(t)$, we obtain that $\uu_\eps,\uv_\eps$ are to highest order
described by the corresponding (shifted) optimal profiles,
\begin{equation*}
  U^{(0)}(z,y,t)=q(z),\qquad V^{(0)}(z,y,t)= w\big(z - z_0(y,t)\big),
\end{equation*}
where $z_0(y,t)\in\R$ and where we set
$q=q_1$, $w=q_2$ to reduce the number of indices.
We define the linear operators
\begin{equation*}
  L_1=  -\partial^2_z + W_1''(q),\qquad
  L_2 = -\partial^2_z + W_2''(w(\cdot -z_0)),
\end{equation*}
use \eqref{eq:U0V0} and $\Delta d(x) = -\mc(y)-\eps (\sum_{i=1}^{n-1}
\kappa_i^2)z + O(\eps^2)$, see \cite{LoMa00}, and obtain
\begin{align}
  \label{eq:exp-mu-details}
  \mu^\eps_1 = &\big(L_1 U^{(1)} +\mc q'(z)\big)\\
  &+ \eps\big(L_1 U^{(2)} + \mc\partial_z U^{(1)} +(\sum_{i=1}^{n-1}
  \kappa_i^2)zq' + \frac{1}{2}W_1'''(q)(U^{(1)})^2\big)
  \notag\\
  &+ \eps^2\mu_1^{(2)} +O(\eps^3),
  \notag
\end{align}
where $q=q(z)$, $U^{(1)}=U^{(1)}(z,y,t)$, $U^{(2)}=U^{(2)}(z,y,t)$,
$\mc=\mc(y,t)$, $\kappa_i=\kappa_i(y,t)$.
A similar expansion shows for the operator $\mathcal L^\eps_1=-\eps\Delta +
\frac{1}{\eps}W_1''(u_\eps)$ that
\begin{align*}
    \mathcal L^\eps_1 &= \frac{1}{\eps}L_1 + \big(\mc\partial_z +
    W_1'''(q)U^{(1)}\big)\\
    &\qquad +\eps \big(-\Delta_y + (\sum_{i=1}^{n-1} \kappa_i^2)z\partial_z
    + W_1'''(q)U^{(2)}+\frac{1}{2}W_1^{(iv)}(q)(U^{(1)})^2\big)+ O(\eps^2).
\end{align*}
Altogether we derive for the right-hand side of \eqref{eq:WfMod-ex1} that
\begin{align}
  \label{eq:exp-ueq}
  -\frac{1}{\eps}\mathcal L_1^\eps \mu^\eps_1 &=
  -\frac{1}{\eps^2}L_1\mu_1^{(0)}
  \\
  &-\frac{1}{\eps}\big(L_1\mu_1^{(1)} + (\mc\partial_z +
  W_1'''(q)U^{(1)})\mu_1^{(0)}\big)
  \notag\\
  &-\Big[L_1\mu_1^{(2)}+\big(\mc\partial_z + W_1'''(q)U^{(1)}\big)\mu_1^{(1)}
  \notag\\
  &\qquad
  + \big(-\Delta_y + (\sum_{i=1}^{n-1} \kappa_i^2)z\partial_z +
  W_1'''(q)U^{(2)}+\frac{1}{2}W_1^{(iv)}(q)(U^{(1)})^2\big)\mu_1^{(0)}\Big]
  + O(\eps)\notag
\end{align}
Analogue expansions hold for $\uv_\eps$, where $q$ must be replaced by
$w(\cdot-z_0)$.

With these properties we obtain to leading order $\eps^0$ in
\eqref{eq:exp-mu} that
\begin{equation}
  \mu_1^{(0)} = L_1 U^{(1)} +\mc q', %
  \label{eq:mu-0}
\end{equation}
and from order $\eps^{-2}$ in \eqref{eq:WfMod-ex1} that
\begin{align}
  0 &= L_1\mu_1^{(0)}-M P'\big(\Phi(U^{(0)})-V^{(0)}\big)\Phi'(U^{(0)}).
  \label{eq:inn-exp-3}
\end{align}
We compute
\begin{align*}
  \big(\Phi(U^{(0)})-V^{(0)}\big)(z,y,t) &= \Phi(q(z))-w(z-z_0(y,t)) \\
  &= w(z) - w(z-z_0(y,t)) = z_0(y,t) b(z,y,t),\\
  \text{with }\qquad b(z,y,t) &=\int_0^1 w'\big(z-(1-r)z_0(y,t)\big)\,dr
  \,>\, 0.
\end{align*}
We multiply \eqref{eq:inn-exp-3} by $q'$ and integrate over $\R$. By
$L_1q'=0$ and the matching conditions $q'(\pm\infty)=q''(\pm\infty)=0$
we deduce that
\begin{equation*}
  0 = \int_\R P'(\Phi(q)-w)\Phi'(q)q'\,dz
  = 2lz_0^{2l-1}\int_\R b^{2l-1}\Phi'(q)q'\,dz.
\end{equation*}
Since $b,\Phi',q'$ are all positive we deduce that $z_0=0$.
In particular,
\begin{align}
  \Phi(\uu_\eps)-\uv_\eps &= \Phi(U^{(0)})-V^{(0)} + \eps
  \big(\Phi'(U^{(0)})U^{(1)} - V^{(1)}\big) + O(\eps^2)
  \label{eq:uv-diff}\\
  &= \eps\big(\Phi'(U^{(0)})U^{(1)} - V^{(1)}\big)+ O(\eps^2) \notag
\end{align}
and in \eqref{eq:inn-exp-3} the additional contribution from the penalty
term drops out. Thus we can follow \cite{LoMa00} and obtain that
\begin{equation*}
  \mu_1^{(0)} = -(\Delta d)(y,t)q'(z),\qquad U^{(1)}=0.
\end{equation*}
We now proceed similarly for $\mu_2^{(0)}$ and $V^{(1)}$ and deduce from
\eqref{eq:exp-mu-details} and \eqref{eq:WfMod-ex2}
\begin{align}
  \mu_2^{(0)} &= L_2 V^{(1)} +\mc w'(\cdot-z_0) = L_2 V^{(1)} +\mc w',
  \label{eq:mu2-0}\\
  0 &= L_2\mu_2^{(0)}+M P'\big(\phi(U^{(0)})-V^{(0)}\big) = L_2\mu_2^{(0)}.
  \label{eq:inn-exp-4}
\end{align}
Since the kernel of $L_2$ is spanned by $w'$ we deduce that
\begin{equation*}
  \mu_2^{(0)}(z,y,t) = \alpha(y,t)w'(z)
\end{equation*}
Moreover, \eqref{eq:mu2-0} yields
\begin{equation*}
  L_2 V^{(1)}(z,y,t) = \big(\alpha(y,t)+(\Delta d)(y,t)\big)w'(z).
\end{equation*}
Multiplying this equation by $w'$ and integrating in $z$ we obtain from
$L_2w'=0$ and the matching conditions at $z=\pm\infty$
\begin{equation*}
  0 = \big(\alpha(y,t)+(\Delta d)(y,t)\big) \int_\R (w')^2,
\end{equation*}
hence $\alpha(y,t)=-(\Delta d)(y,t)$ and
\begin{equation*}
  \mu_2^{(0)}(z,y,t) = -(\Delta d)(y,t)w'(z).
\end{equation*}
We also obtain $L_2 V^{(1)}=0$, which implies
\begin{equation*}
  V^{(1)}(z,y,t) = \beta(y,t)w'(z),
\end{equation*}
hence, by \eqref{eq:uv-diff} and $U^{(1)}=0$
\begin{equation}
  \Phi(\uu_\eps)-\uv_\eps  = -\eps\beta w' + O(\eps^2).
  \label{eq:pen-small}
\end{equation}
To the next order $\eps$ in $\mu_{1}^\eps$ we deduce from
\eqref{eq:exp-mu-details} and $U^{(1)}=0$ that
\begin{equation*}
  \mu_1^{(1)}(z,y,t) =  (\sum_{i=1}^{n-1} \kappa_i^2)zq'(z)+L_1U^{(2)}.
\end{equation*}
Evaluating the order $\eps^{-1}$ in \eqref{eq:exp-ueq} and using $l\geq 2$
and \eqref{eq:pen-small} we see that the penalty term
\begin{equation}
  M_\eps P'\big(\Phi(\uu_\eps)-\uv_\eps\big) =
  M\eps^{-2}\eps^{2l-1}P'\big(-\beta w'\big) + O(\eps^{4l-4})
  \label{eq:uv-diff2}
\end{equation}
does not contribute to the orders $\eps^{-1}, \eps^{0}$ and deduce that
\begin{align}
  0 &= -\mc \partial_z \mu_1^{(0)} -L_1\mu_1^{(1)}= -\mc^2q'' - L_1\mu_1^{(1)}.
  \label{eq:exp-5}
\end{align}
By \eqref{eq:uv-diff} the additional contribution from
the penalty term drops out and we can again follow \cite{LoMa00}.
Using the matching conditions
$\mu_1^{(1)}(\pm\infty,y,t)=U^{(2)}(\pm\infty,y,t)=0$ we then obtain
\begin{equation}
  \mu_1^{(1)}(z,y,t)= \mc^2(y,t)\big(\frac{z}{2}+c_1(y,t)\big)q'(z),\qquad
  U^{(2)}(z,y,t) = f_1(z,y,t)q'(z)
  \label{eq:mu1-U2}
\end{equation}
for some bounded function $c_1$ and $f_1$ given by
\begin{equation*}
  \partial_zf_1 = \frac{1}{(q')^2}g_1,\qquad
  \partial_zg_1 = \Big(\big(\sum_{i=1}^{n-1} \kappa_i^2\big)z
  -c_1 \mc^2\Big)(q')^2.
\end{equation*}
In the next step we determine the next orders in the expansion
of $\mu^\eps_2$ and \eqref{eq:WfMod-ex2}. Using that $z_0=0$ and
$V^{(0)}(z,y,t)=w(z)$ we deduce that
\begin{equation*}
  \mu_2^{(1)}(z,y,t) =  (\sum_{i=1}^{n-1}
  \kappa_i^2)zw'(z)+L_2V^{(2)}+\mc(y,t)\beta(y,t)
  w''(z)+\frac{1}{2}W_2'''(w(z))\beta^2(y,t)(w'(z))^2
\end{equation*}
and
\begin{align}
  0 &= (-\mc\partial_z -W_2'''(w)\beta w')\mu_2^{(0)} -L_2\mu_2^{(1)}
  \label{eq:exp-6}\\
  &= -\mc^2w'' -\beta \mc W_2'''(w)(w')^2-L_2\mu_2^{(1)}.
  \notag
\end{align}
Now $L_2w'=0$ implies that $L_2(w'')=-W_2'''(w)(w')^2$. We therefore have
\begin{align*}
  &\mu_2^{(1)} =  L_2\big(V^{(2)}-\frac{1}{2}\beta^2 w'' \big)+\mc\beta w'' +
  (\sum_{i=1}^{n-1} \kappa_i^2)zw',\\
  &L_2\big(\mu_2^{(1)}-\beta \mc w''\big) = -\mc^2w''.
\end{align*}
The second equation yields
\begin{equation*}
  \mu_2^{(1)}-\beta \mc w'' = \mc^2(y,t)\big(\frac{z}{2}+c_2(y,t)\big)w'(z)
\end{equation*}
for some bounded function $c_2$. Moreover
\begin{equation*}
  L_2\big(V^{(2)}-\frac{1}{2}\beta^2 w''\big) =
  \mc^2\big(\frac{z}{2}+c_2\big)w' -(\sum_{i=1}^{n-1} \kappa_i^2)zw',
\end{equation*}
which gives
\begin{equation*}
  V^{(2)}(z,y,t)-\frac{1}{2}\beta^2(y,t) w''(z) = f_2(z,y,t)w'(z)
\end{equation*}
and $f_2$ given by
\begin{equation*}
  \partial_zf_2 = \frac{1}{(w')^2}g_2,\qquad
  \partial_zg_2 = \Big(\big(\sum_{i=1}^{n-1} \kappa_i^2\big)z
  -(\frac{z}{2}+c_2)\mc^2\Big)(w')^2.
\end{equation*}
We now consider the order $O(1)$ in \eqref{eq:WfMod-ex1}, which gives
\begin{align}
  -q'\partial_t d = L_1 \mu_1^{(2)} +\mc(\mu_1^{(1)})' -
  (\Delta_y \mc)q' + \mc(\sum_{i=1}^{n-1} \kappa_i^2)zq'' +
  \mc W_1'''(q)q'U^{(2)}. \label{eq:expan-O1}
\end{align}
We multiply this equation by $q'$, integrate over $z$, and evaluate the
different terms on the right-hand side:
The matching conditions first imply
\begin{equation*}
  \int (L_1 \mu_1^{(2)})q' = 0.
\end{equation*}
Next \eqref{eq:mu1-U2} implies
\begin{equation*}
  \mc\int (\mu_1^{(1)})'q' = \mc\int \frac{1}{2}\mc^2(q')^2 +
  \mc^2\big(\frac{z}{2}+c_1\big)(\frac{1}{2}q'^2)'
  = \frac{1}{4}\mc^3 \int (q')^2
\end{equation*}
and
\begin{equation*}
  \mc(\sum_{i=1}^{n-1} \kappa_i^2)\int zq''q' =
  -\frac{1}{2}\mc (\sum_{i=1}^{n-1}
  \kappa_i^2)\int(q')^2.
\end{equation*}
Finally,
\begin{align*}
  \mc\int W_1'''(q)q'U^{(2)}q' = -\mc\int U^{(2)}L_1 q'' &= -\mc\int
  q''L_1U^{(2)}\\
  &= -\mc\int q''\mu_1^{(1)} +\mc(\sum_{i=1}^{n-1} \kappa_i^2)\int zq'q''\\
  &= \frac{1}{4}\mc^3  \int (q')^2 - \frac{1}{2}\mc(\sum_{i=1}^{n-1}
  \kappa_i^2)\int(q')^2.
\end{align*}
Therefore, we deduce from \eqref{eq:expan-O1} that
\begin{equation}
  \label{eq:evol-u}
  V = -\partial_t d = -\Delta_y \mc +\frac{1}{2}\mc^3 -\mc(\sum_{i=1}^{n-1}
  \kappa_i^2),
\end{equation}
which shows that $(\Gamma(t))_t$ evolves by Willmore flow.

To confirm the consistency of our Ansatz we also consider the order $O(1)$
in \eqref{eq:WfMod-ex2}. This gives
\begin{align*}
  -w'\partial_t d &= L_2 \mu_2^{(2)} +\mc(\mu_2^{(1)})' - (\Delta_y\mc)w'
  +\mc(\sum_{i=1}^{n-1} \kappa_i^2)zw'' +\mc W_2'''(w)w'V^{(2)}
  \notag\\
  &\quad +\beta W_2'''(w)w'\mu_2^{(1)} +\frac{1}{2}\mc W_2^{(iv)}(w)\beta^2
  (w')^3
  \notag\\
  &= L_2 \mu_2^{(2)} +\mc\big(\mc^2(\frac{z}{2}+c_2)w'\big)' - (\Delta_y \mc)w'
  +\mc(\sum_{i=1}^{n-1} \kappa_i^2)zw'' +\mc W_2'''(w)f_2(w')^2
  \notag\\
  &\quad + \beta \big(\mc^2 w'''+  \mc^2(\frac{z}{2}+c_2)W_2'''(w)(w')^2\big)
  \notag\\
  &\quad + \beta^2\big(\mc W_2'''(w)w''w'
  +\mc\frac{1}{2}W_2'''(w)w''w'+\mc\frac{1}{2}W_2^{(iv)}(w)(w')^3\big)
  \notag\\
  &= \Big[L_2 \mu_2^{(2)} +\mc\big(\mc^2(\frac{z}{2}+c_2)w'\big)'
  - (\Delta_y\mc)w' +\mc(\sum_{i=1}^{n-1} \kappa_i^2)zw'' +
  \mc W_2'''(w)f_2(w')^2\Big]
  \notag\\
  &\quad + \beta \mc^2 \big(w'''+  (\frac{z}{2}+c_2)W_2'''(w)(w')^2\big)-
  \frac{1}{2}\beta^2 \mc L_2w''',
  \notag
\end{align*}
where in the last equality we have used the identity
$-W_2^{(iv)}(w')^3=L_2w''' +3W_2'''(w)w''w'$. Integrating the equation
against $w'$ we obtain for the left-hand side and for the square bracket
on the right-hand side the analogue expressions as for the $\uu_\eps$
equation, hence
\begin{align}
  \label{eq:evol-v}
  -\partial_t d \int_\R (w')^2 &= \big[-\Delta_y \mc +\frac{1}{2}\mc^3 -
  \mc(\sum_{i=1}^{n-1} \kappa_i^2)\big]\int_\R (w')^2\\
  \notag
  &\qquad + \beta \mc^2\int_\R
  w'\Big(w'''+(\frac{z}{2}+c_2)W_2'''(w)(w')^2\Big) -\frac{1}{2}\beta^2\mc
  \int_\R w'L_2w'''.
\end{align}
Using $L_2w'=0$ the last integral on the right-hand side vanishes. Moreover,
we compute
\begin{align*}
  \int w'w'''+ \int w'(\frac{z}{2}+c_2)W_2'''(w)(w')^2 &=
  \int -(w'')^2 - \int w'(\frac{z}{2}+c_2)L_2(w'')\\
  &= \int -(w'')^2 - \int w'' L_2\big((\frac{z}{2}+c_2)w'\big) = 0
\end{align*}
since $L_2\big((\frac{z}{2}+c_2)w'\big)=-w''$.

We therefore deduce from \eqref{eq:evol-v} the same evolution law
\eqref{eq:evol-u} and hence the consistency of our Ansatz.

\section{Numerical simulations}\label{sec:applications}

In this section, we present numerical simulations of the modified two-variable diffuse Willmore flow proposed in Sec.~\ref{sec: willmoreMod}. We first consider two situations where an analytical solution is available: a growing circle in two space dimensions, and an evolution towards a configuration that is determined by minimizer of the elastica functional restricted to a suitable class of graphs. We will in both cases obtain a good agreement with the analytical solutions and therefore some justification of our approach. Moreover, we study the evolution of two colliding circles and demonstrate that in our new approach a cross formation is avoided by the additional energy contribution \eqref{eq:penaltyEnergy}.

Beyond a simple justification of our new energy, we use our approach to investigate two examples where the avoidance of intersecting phase boundaries is essential: First the example that was already brought up in the introduction (see Fig.~\ref{fig:obstacle}), namely the minimization of a functional given by the sum of elastica energy and an adhesion energy to some inclusion present in the domain. Secondly, we demonstrate that our approach can be used to approximate the value of the lower-semicontinuous envelope of the elastica energy for configurations with cusps.

\subsection{Discretization}
\label{subsec:discr}

In our numerical simulations we use suitable discretizations in time and space. For the time discretization of \eqref{eq:WfMod1},\eqref{eq:WfMod2} we
apply a semi-implicit Euler scheme, where nonlinear terms
are linearized, see \cite{EsRR14}. Moreover, we use an operator
splitting approach in order to solve each fourth order equation in
\eqref{eq:WfMod1},\eqref{eq:WfMod2} separately on the same grid. In
order to discretize in space, we introduce a triangulation $\Omega_h$
of $\Omega$ and apply linear finite elements. For the examples
presented in
Secs. \ref{subsubsec:growingCircle}--\ref{subsubsec:collCirc} we apply
uniform grids, whereas for the final two examples in
Secs. \ref{subsubsec:obstacle}--\ref{subsubsec:piri}, we have used a
simple adaptive strategy described e.g. in \cite{BaNuSt04}. The
numerical scheme is implemented in the adaptive finite element library
AMDiS \cite{amdis}.

\subsection{Choice of double-well potentials and penalty term}
\label{sec:choices}

In our numerical simulations we use the double-well potentials
\begin{equation}
  W_1(r) = 18r^2(1-r)^2,\qquad W_2 = 4 W_1. \label{eq:choiceW}
\end{equation}
The associated optimal profile functions and the surface tension
coefficients with respect to $W_j$ from \eqref{eq:sigma} are given by
\begin{equation}
  q_1(r) = \frac{e^{6r}}{1+e^{6r}},\quad \sigma_1=1,\qquad
  q_2(r) = \frac{e^{12r}}{1+e^{12r}},\quad \sigma_2=2, \label{eq:choiceW-q}
\end{equation}
hence $q_2=q_1(2\cdot)$.
We further deduce the property
\begin{equation*}
  \WFv_\eps(v) = \int_\Omega\frac{1}{2\eps}
  \Big(-\eps\Delta v +
  \frac{4}{\eps}W_1'(v)\Big)^2
  = \int_\Omega\frac{2}{\eps}
  \Big(-\frac{\eps}{2}\Delta v +
  \frac{2}{\eps}W_1'(v)\Big)^2
  =2\WFu_{\eps/2}(v),
\end{equation*}
which yields
\begin{equation}
  \frac{1}{\sigma_2}\WFv_\eps = \frac{1}{\sigma_1}\WFu_{\eps/2}
  \label{eq:choiceW-WF}
\end{equation}
and the equivalence of \eqref{eq:Wf-d} for $\eps, W_2$ and for $\tilde\eps$, $W_1$ with $\tilde\eps=\frac{1}{2}\eps$.

From \eqref{eq:choiceW-q} we deduce that
\begin{equation}
  \Phi(p) = \frac{p^2}{1-2p+2p^2}.
  \label{eq:choiceW-Phi}
\end{equation}
Moreover, we let $P(\Phi(\uu)-\uv) =
(\Phi(\uu) - \uv)^4$ and use, for better performance of the scheme, a modification of the penalty term used in the analysis above by introducing a threshold value $\theta>0$. More precisely we choose
\begin{equation}
  \label{eq:PCutOff}
  \tilde{P}(\Phi(\uu)-\uv) :=
  \left(P(\Phi(\uu)-\uv) - \theta^4\right)_+
\end{equation}
and use the modified penalty energy
\begin{align}
  \tPE(\uu,\uv):=
  \int_\Omega \tilde{P}(\Phi(u)-v) \dif \LL^n.
  \label{eq:penaltyEnergyCutOff}
\end{align}
This approach aims at locally penalizing deviations of $\uu$ and
$\uv$ from being close to optimal profiles without influencing the
flow where there is no interaction.
We remark that
the modified penalty energy \eqref{eq:penaltyEnergyCutOff} enters the
total energy \eqref{eq:totalEnergy} with the prefactor $M_\eps =
M\eps^{-2}$.

\subsection{Numerical Examples}
\label{subsec:numEx}

In general, we assume no flux boundary conditions
\begin{equation}
  \label{eq:nfbcs}
  \nabla \uu \cdot \nu_\Omega
  = \nabla \uv \cdot \nu_\Omega
  = \nabla \mu_i \cdot \nu_\Omega = 0\qquad \text{on} \quad
  \partial\Omega, \quad i=1,2,
\end{equation}
where $\mu_i$, $i=1,2$, denote the diffuse curvatures
\begin{equation*}
  \mu_1 := -\eps\Delta \uu+
  \frac{1}{\eps}W_1'(\uu), \qquad
  \mu_2 = -\eps\Delta\uv + \frac{1}{\eps}W_2'(\uv).
\end{equation*}
In addition, we use parameters from Tab. \ref{tab:parameters}. In
practice, by \eqref{eq:choiceW-q} and \eqref{eq:choiceW-WF},
one can replace $W_2$ by $W_1$ and
use $\eps_2=\frac{\eps}{2}$ for \eqref{eq:WfMod2} instead (appropriately adjusting the prefactors of the additional penalty terms).
\begin{table}[h]
  \begin{center}
    \renewcommand{\arraystretch}{1.5}
    \begin{tabular}[h]{| c | c | c | c | c | c | c |
        c | c | c | c | c | c | c | c | c }
      \hline
       parameter & $\eps$  & $M$ & $\theta$ \\
      \hline
       value &  $\frac{1}{8}$ & $10^7$ & $0.004$\\
      \hline
    \end{tabular}
    \renewcommand{\arraystretch}{1.0}
    \vskip5pt \caption{\footnotesize Parameters used for the
      simulations.}
    \label{tab:parameters}
  \end{center}
\end{table}

\subsubsection{Benchmark: Growing Circle}
\label{subsubsec:growingCircle}

As a first test, we observe that the modified flow yields a reasonable
approximation of the Willmore flow in the case of a growing circle. In
Fig.~\ref{fig:circle}, we compare numerical results with the analytic
expression for a circle growing according to Willmore flow. Thereby,
we use an initial radius $R(0)=0.25$ and plot the perimeter $L(t)=2\pi
R(t)$ of the analytic solution of the Willmore flow versus time and
compare with the discrete diffuse interface length $L_{\eps}^{(1)}$,
$L_{\eps}^{(2)}$ from
the simulation results with
\begin{align}
  \label{eq:diffPerimeter1}
  L_\eps^{(1)} (t) :=
  \int_\Omega\left(\frac{\eps}{2}|\nabla \uu(\cdot, t)|^2
    + \frac1\eps W_1(\uu(\cdot, t))
  \right)\dif\LL^n, \\
  \label{eq:diffPerimeter2}
  L_\eps^{(2)} (t) :=
  \int_\Omega\left(\frac{\eps}{2}|\nabla \uv(\cdot, t)|^2
    + \frac1\eps W_2(\uv(\cdot, t))
  \right)\dif\LL^n,
\end{align}
and with results of a standard diffuse-interface approximation,
i.e.~without the additional penalty energy contribution, see
Fig.~\ref{fig:circle}, left. The two curves for the numerical outcome
are almost indistinguishable and show the expected approximation of the
analytic curve. On the right, one can see the results for
both phase-field variables $\uu$ and $\uv$ in comparison with
the analytic solution.

\begin{figure}[h]
\begin{center}
\includegraphics*[width=0.49\textwidth]{./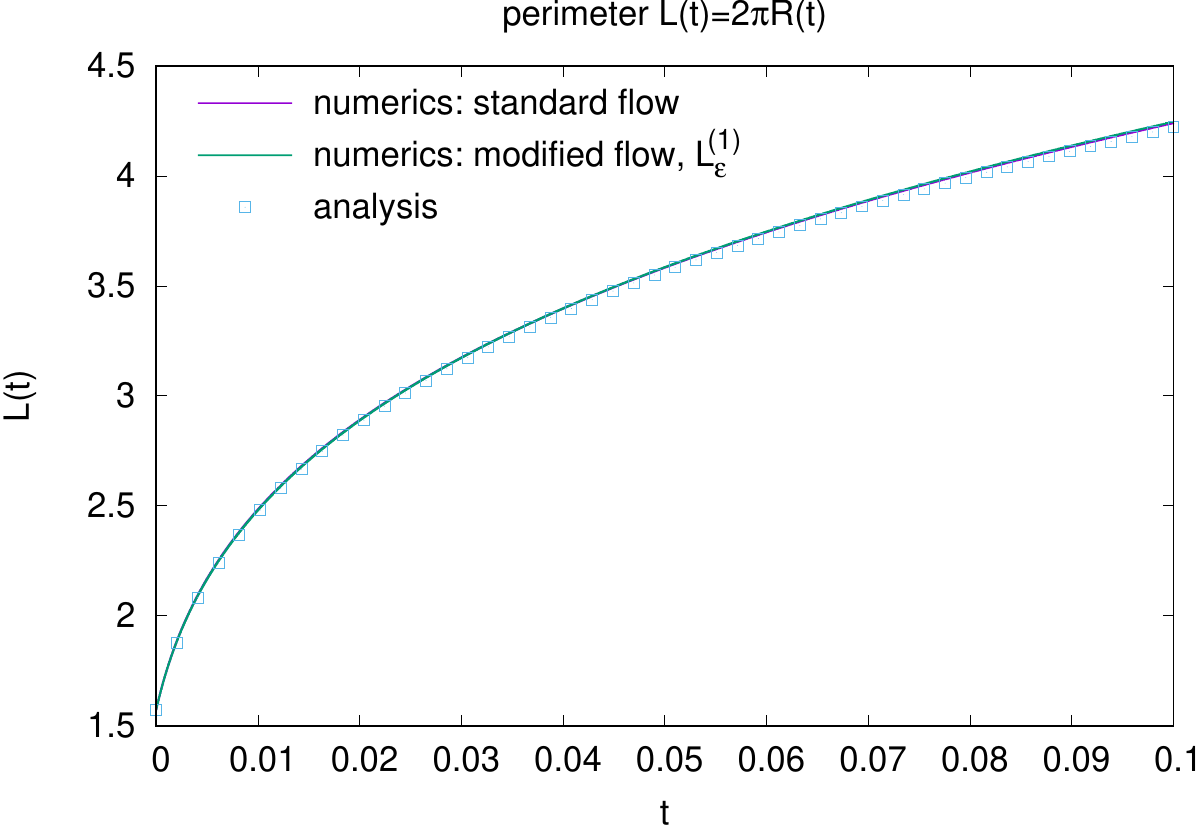}
\includegraphics*[width=0.49\textwidth]{./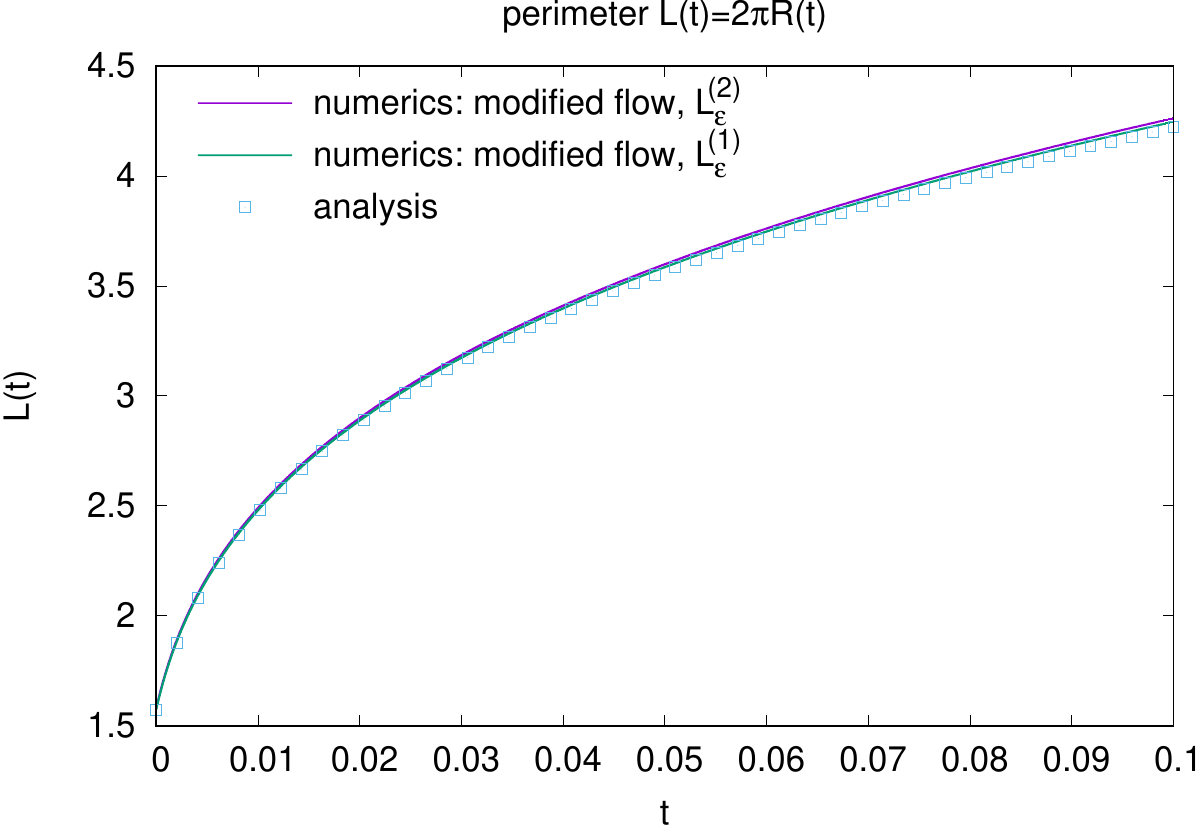}
\caption{\footnotesize \label{fig:circle} Evolution of modified flow
  \eqref{eq:WfMod1}--\eqref{eq:WfMod2}: Perimeter of growing circle
  versus time: Analytic expression and results of standard and
  modified diffuse-interface flow (left). Analytic expression and
  results of modified diffuse-interface flow for the two variables
  $\uu$ and $\uv$ (right).}
\end{center}
\end{figure}

\subsubsection{Benchmark: Example from \cite{LiJe07}}
\label{subsubsec:LiJe07}

In a second benchmark example we follow \cite{EsRR14} and we compare
in Fig.~\ref{fig:LiJe07} a nearly stationary state in numerical results
with analytic minimizers of the elastica functional found in
\cite{LiJe07}. Linn{\'e}r and Jerome consider the elastica functional
for graphs among all functions $f$ in $W^{2,2}((0,1))$ satisfying
$f(0)=0$ and $f'(1) = \infty$. Moreover, they prove existence and
uniqueness and provide an explicit representation of the
minimizer. For the numerical approach in this particular example, we
have chosen a rectangular domain $\Omega = (-1.1,1.1)\times(2.2,2.2)$
and assume periodicity of all variables on $\partial \Omega$. For our
simulations we choose initial conditions for $\uu$ and $\uv$ that
represent an ellipse. In Fig.~\ref{fig:LiJe07}, one can see the nearly
stationary level set $\{\uu_h=0.5\}$ at time $t \approx 0.2448$
compared to the analytic minimizer $\Gamma_2$ from
\cite{LiJe07}. Note that, similar to \cite{EsRR14}, we have shifted
the solution from \cite{LiJe07} in an appropriate way. Moreover, the
discrete diffuse Willmore energies $\W_{\eps}^{(1)}(\uu_h) \approx
2.82307$ and $\W^{(2)}_{\eps}(\uv_h)\approx 2.88127$ are close to
the analytic value $\approx 2.8711$. Together with the benchmark
problem from Sec.~\ref{subsubsec:growingCircle} we can conclude that
the modified flow \eqref{eq:WfMod1},\eqref{eq:WfMod2} yields a sound
quantitative approximation of Willmore flow of curves.
\begin{figure}
\begin{center}
\includegraphics*[width=0.3\textwidth]{./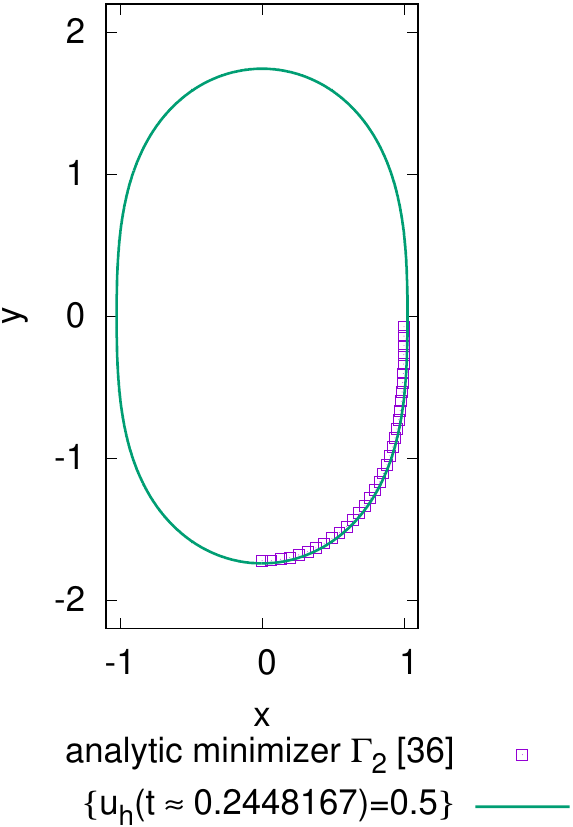}
\caption{\footnotesize \label{fig:LiJe07} Almost stationary soultion
  of \eqref{eq:WfMod1}--\eqref{eq:WfMod2}. Level curve $\{\uu_h=1/2\}$
  at time $t \approx 0.2448$ and shifted version $\Gamma_2$ of the
  analytic minimizer from \cite{LiJe07}.}
\end{center}
\end{figure}

\subsubsection{Colliding Circles}
\label{subsubsec:collCirc}

In Fig.~\ref{fig:mod2}, we consider an initial condition
representing two circles. During the evolution of the modified flow
the circles grow until the interfaces come sufficiently close, and
unlike for the standard diffuse Willmore flow we do not observe any
transversal intersections as reported in \cite{EsRR14,BrMO15}. In
contrast, the interfaces stop moving at the meeting point and one can
see the evolution of $\uu_h$ and $\uv_h$ towards almost stationary
discrete states in Fig.~\ref{fig:mod2}.

\begin{figure}
\begin{center}
\includegraphics*[width=0.17\textwidth]{./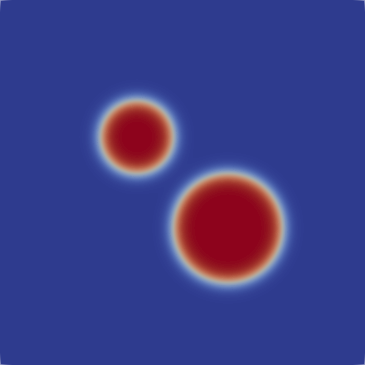}
\includegraphics*[width=0.17\textwidth]{./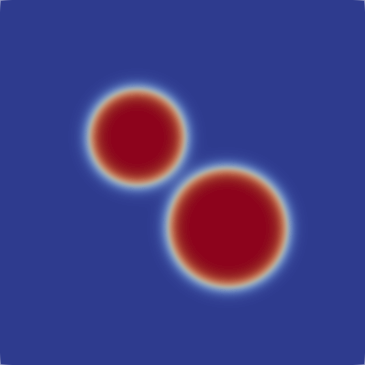}
\includegraphics*[width=0.17\textwidth]{./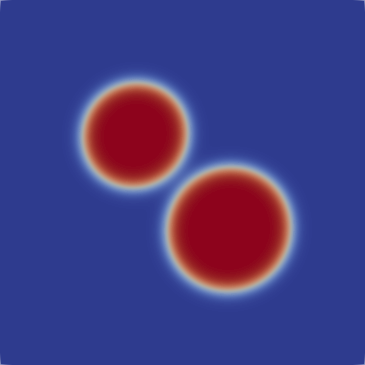}
\includegraphics*[width=0.17\textwidth]{./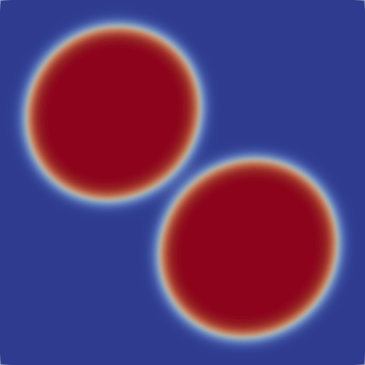}
\includegraphics*[width=0.17\textwidth]{./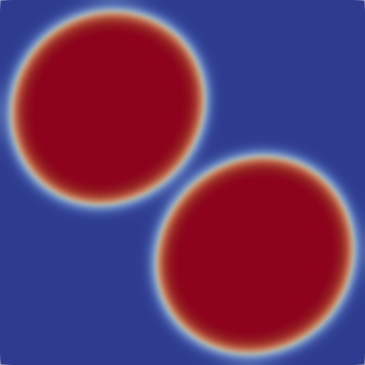}
\includegraphics*[width=0.06\textwidth]{./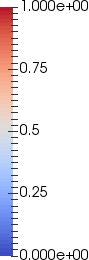}\\
\includegraphics*[width=0.17\textwidth]{./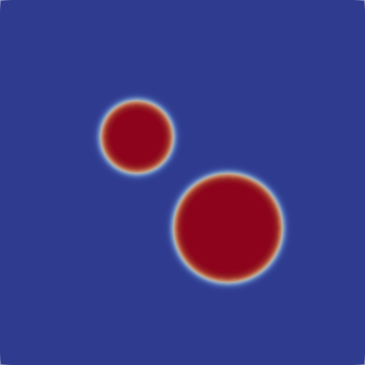}
\includegraphics*[width=0.17\textwidth]{./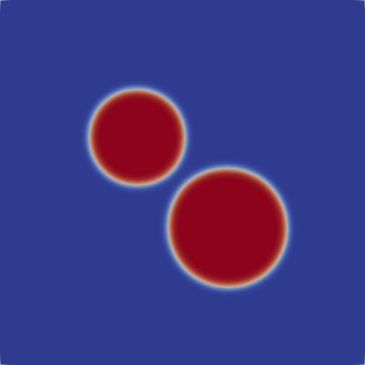}
\includegraphics*[width=0.17\textwidth]{./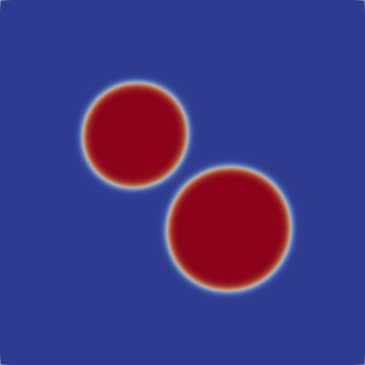}
\includegraphics*[width=0.17\textwidth]{./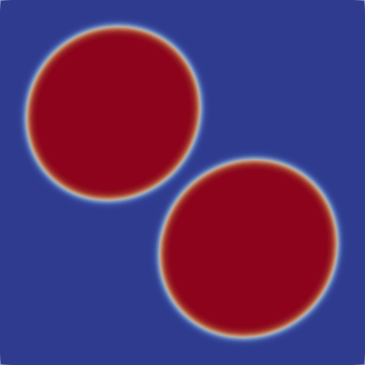}
\includegraphics*[width=0.17\textwidth]{./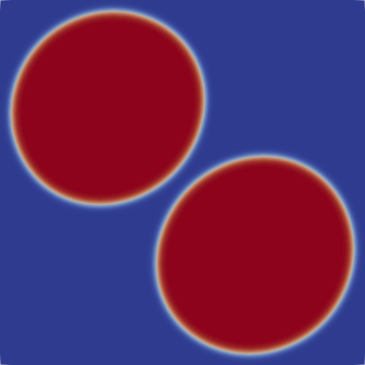}
\includegraphics*[width=0.06\textwidth]{./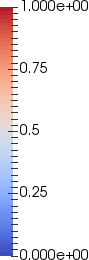}
\caption{\footnotesize \label{fig:mod2} Evolution of modified flow
  \eqref{eq:WfMod1}--\eqref{eq:WfMod2}: Discrete phase-fields
  $\uu_h$ and $\uv_h$ for
  different times $t=0$, $t\approx0.0016$, $t\approx0.0030$, \mbox{$t\approx0.0362$,}
  $t\approx0.1285$.}
\end{center}
\end{figure}

\subsubsection{Adhesion to a domain inclusion}
\label{subsubsec:obstacle}

We come back to the motivating example from Fig.~\ref{fig:obstacle}
and use the approach presented in this contribution in order to avoid
transversal intersections which are undesired from the application
point of view. From the modeling perspective, we follow
\cite{DaAuGo14} and consider a domain $\Omega\subset \R^n$ with a particle inclusion, which is given by an open set $\Omega_p\ssubset\Omega$. We introduce a sharp-interface membrane energy
\begin{equation}
  \label{eq:obstacleEnergySharp}
  \E(\Gamma) :=   \mathcal{W}(\Gamma)
  - \underbrace{w\int_{\Gamma_{\text{ad}}} \; \dif
  \mathcal{H}^{n-1}(x)}_{=:\E_{\text{ad}}}
\end{equation}
that includes an elastic contribution and an additional contact adhesion energy $\E_{\text{ad}}$. The latter measures the size of the contact set $\Gamma_{\text{ad}}:=\Gamma\cap \partial\Omega_p$, the adhesion strength is determined by the parameter $w\ge 0$. In addition, admissible membranes are confined to the set $\Omega\setminus\Omega_p$.

In order to obtain a diffuse-interface counterpart of
\eqref{eq:obstacleEnergySharp}, we introduce variables
$\psi_\eps^{(i)}:\Omega \to \R$, $i=1,2$,
\begin{equation}
  \label{eq:obstaclePsi}
  \psi^{(1)}_\eps(x) := q_1\Big(\frac{d_p}{\eps}\Big), \quad
  \psi^{(2)}_\eps(x) := q_2\Big(\frac{d_p}{\eps}\Big), \quad x \in \Omega,
\end{equation}
with $d_p$ the signed
distance to $\partial \Omega_p$, where $d_p>0$ in $\Omega_p$. The
diffuse membrane energy then reads
\begin{equation}
  \label{eq:obstacleEnergyDiff}
  \E_\eps(u,v) := \F_\eps(u,v) -  \E^{(1)}_{\text{ad}, \eps}(\uu)
  - \E^{(2)}_{\text{ad}, \eps}(\uv)
\end{equation}
with
\begin{equation*}
  \E^{(i)}_{\text{ad}, \eps}(u) := \int_\Omega \left(
    \frac{\eps}{2}|\nabla u|^2 + \eps^{-1}W_i(u)
  \right)2\eps^{-1}W_i(\psi_\eps^{(i)}), \quad i=1,2.
\end{equation*}
Moreover, in order to account for a volume
constraint, we add a penalty energy to \eqref{eq:obstacleEnergyDiff}
penalizing deviations from a prescribed diffuse volume integral
value. Finally, we include another penalty energy contribution
\begin{equation*}
  \sim\int_\Omega (\psi^{(1)}_\eps)^2(1-\uu-\psi_\eps^{(1)})^2
  +\int_\Omega (\psi^{(2)}_\eps)^2(1-\uv-\psi_\eps^{(2)})^2
\end{equation*}
which prevents the interface from going through the domain inclusion,
where $\psi_\eps \approx 1$. In Fig.~\ref{fig:modObstacle} we see the
results for the corresponding flow. For this particular example an
adaptively refined grid has been applied, where the grid is locally refined or coarsened according to the
values of $\uu_h$, $\uv_h$ and $\psi_\eps$, see \cite{BaNuSt04}. One
observes that the formation of transversal interfaces as in
Fig.~\ref{fig:obstacle} is now prevented by the new energy
modification. For more detailed information, we refer to \cite{RaRo19}.

\begin{figure}
\begin{center}
  \includegraphics*[width=0.2\textwidth]{./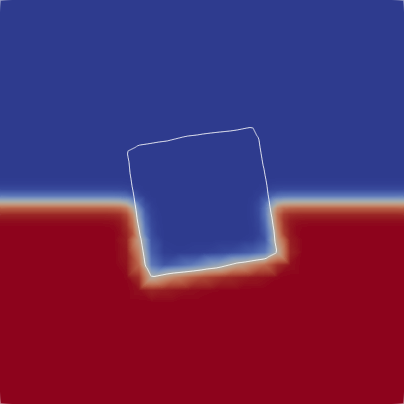}
  \hspace{0.01\textwidth}
  \includegraphics*[width=0.2\textwidth]{./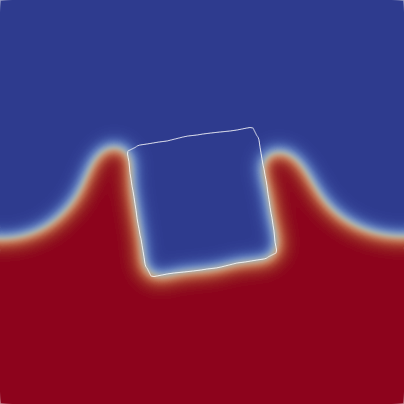}
  \hspace{0.01\textwidth}
  \includegraphics*[width=0.2\textwidth]{./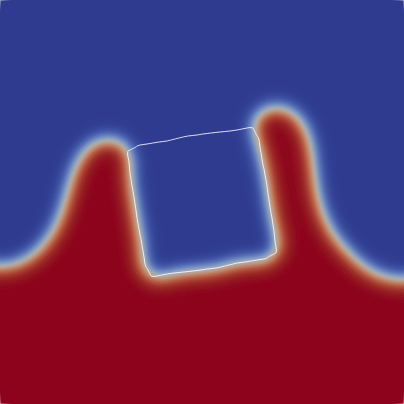}
  \hspace{0.01\textwidth}
  \includegraphics*[width=0.2\textwidth]{./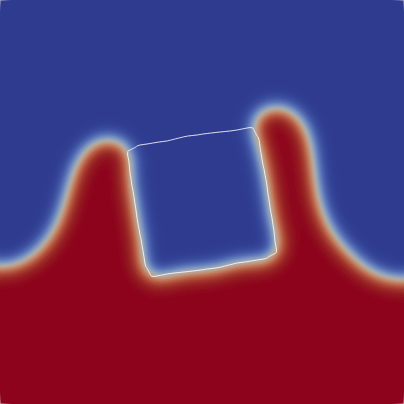}
  \hspace{0.01\textwidth}
  \includegraphics*[width=0.07\textwidth]{./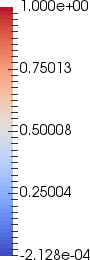}
  \caption{\footnotesize \label{fig:modObstacle} Evolution of modified
    flow for adhesion to domain inclusion application: Discrete
    phase-field $\uu_h$ for different times $t=0$, $t\approx0.0011$,
    \mbox{$t\approx0.0340$,} $t\approx0.3678$.}
\end{center}
\end{figure}

\subsubsection{Approximation of the lower-semicontinuous envelope
of the elastica functional}
\label{subsubsec:piri}

Here we consider a particular set $E$ in the plane resembling a cloverleaf. The set has four connected components, each with a simple cusp. Due to the non-smooth boundary, the elastica functional is not defined for such a configuration. To associate an elastic energy to $E$ one can evaluate the value of the lower-semicontinuous envelope, given by
\begin{equation*}
  \overline{\W}(E) = \inf \{\liminf_{k\to\infty} \W(E_k)\},
\end{equation*}
where the infimum is taken over all sequences $(E_k)_k$ of sets $E_k\ssubset\Omega$ with $C^2$-boundary converging in an $L^1(\Omega)$ sense to $E$. The lower-semicontinuous envelope of the sum of area and elastica functional was studied in \cite{BeDP93} and \cite{BeMu04,BeMu07}, where also the cloverleaf example was considered. It was shown that for all configurations with an even number of simple cusps $\overline{\W}$ is finite. Moreover the minimization procedure in the definition of $\overline{\W}$ leads to an optimal system of curves that extends the boundary $\partial E$ by `ghost interfaces' with even multiplicity. Here we consider a similar problem, where the minimization of the sum of length and elastica functional is replaced by the minimization of the elastica functional subject to a confinement constraint (to the computational domain $\Omega$). To obtain a diffuse approximation a modification of the classical approach is essential, since the latter in general leads to limit configurations with transversal intersections that carry an energy $\overline{\W}$ that is unbounded. We demonstrate here that our modified diffuse Willmore flow leads to reasonable results and configurations that correspond to an optimal system of curves arising in the minimization procedure associated to the definition of $\overline{\W}$ (even if we cannot provide a rigorous justification in the sense of convergence of our modified energy to the lower-semicontinuous envelope of the elastica functional).

With this aim let $E \ssubset \Omega$ be given by four symmetrically distributed drops described by so-called piriforms, see \cite{La72}. We introduce an additional energy contribution that penalizes deviation of the diffuse fields from the set $E$, given by
\begin{equation*}
  \F_{\psi_\eps^{(1)}}(\uu) + \F_{\psi_\eps^{(2)}}(\uv),\qquad
  \F_{\psi_\eps^{(i)}}(u) = \frac{M_\psi^\eps}{2}\int_\Omega (u - \psi_\eps^{(i)})^2,
\end{equation*}
where we choose
\begin{align*}
  \psi_\eps^{(i)}(x) := q_i\left(\frac{d(x)}{\eps}\right), \quad x \in \Omega,
\end{align*}
where $d$ denotes the signed distance to $\partial E$, with
$d>0$ in $E$.
In Fig.~\ref{fig:piriPsi1}, one can see a contour plot of
$\psi^{(1)}_{\eps,h}$ used for the simulation results in
Fig.~\ref{fig:piri2}, where the evolution of $\uu_h$ for
the standard flow with $\eps=\frac1{16}$ towards an almost stationary
state is shown. Starting from a circular interface one observes that the the discrete solutions $\uu_h$ approach the characteristic function of $E$. In the final (almost stationary) configuration  the four cusps are connected by straight diffuse layers with transversal crossings. The computation of the
discrete Willmore energy for the almost stationary state yields
\begin{equation}
  \label{eq:discrWE1}
  \W_{\frac{1}{16}}(\uu_h)  \approx 57.6219.
\end{equation}
A numerical integration of the Willmore energy for the analytic
parameterization of the four piriforms gives
\begin{equation}
  \label{eq:analyticWE1}
  \W(\Gamma)  \approx  64.5136.
\end{equation}
The lower value of the approximate energy is due to the fact that for positive $\eps>0$ an asymptotically small deviation of the diffuse phases from the cloverleaf is allowed. For smaller $\eps$ values one obtains discrete diffuse-interface
Willmore energies
\begin{equation*}
  \W_{\frac{1}{32}}(\uu_h)  \approx 61.7408
  \quad \text{and} \quad
  \W_{\frac{1}{64}}(\uu_h)  \approx 64.3346,
\end{equation*}
which shows the excellent approximation of the analytic value
\eqref{eq:analyticWE1} for decreasing $\eps$, see also plots of
almost stationary solutions to the standard flow for
$\eps=\frac{1}{32}$ and $\eps=\frac{1}{64}$ in Fig.~\ref{fig:piri2Stat}.
\begin{figure}
\begin{center}
  \includegraphics*[width=0.2\textwidth]{./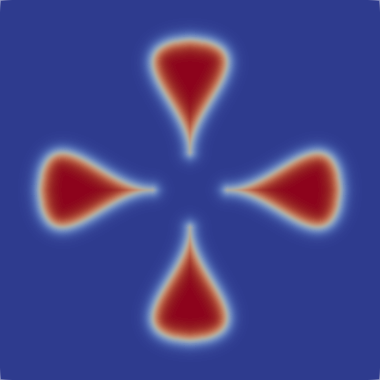}
  \hspace{0.01\textwidth}
  \includegraphics*[width=0.07\textwidth]{./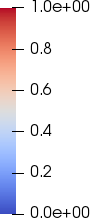}
  \caption{\footnotesize \label{fig:piriPsi1} Contour plot of $\psi_{\eps,h}^{(1)}$.}
\end{center}
\end{figure}

\begin{figure}
\begin{center}
  \includegraphics*[width=0.2\textwidth]{./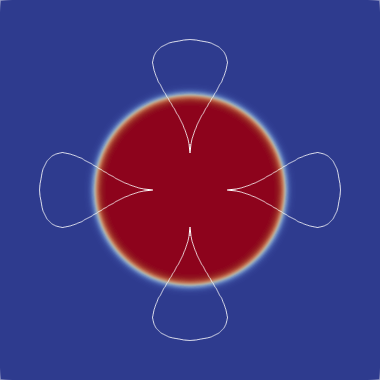}
  \hspace{0.01\textwidth}
  \includegraphics*[width=0.2\textwidth]{./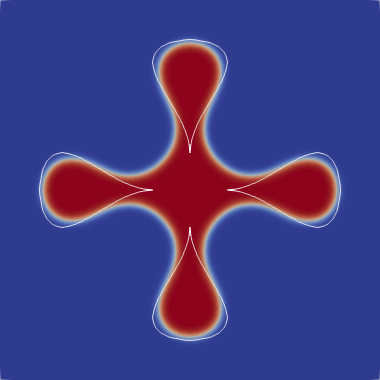}
  \hspace{0.01\textwidth}
  \includegraphics*[width=0.2\textwidth]{./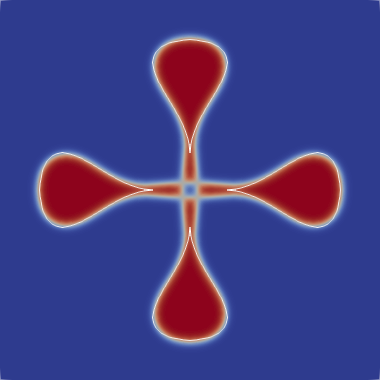}
  \hspace{0.01\textwidth}
  \includegraphics*[width=0.2\textwidth]{./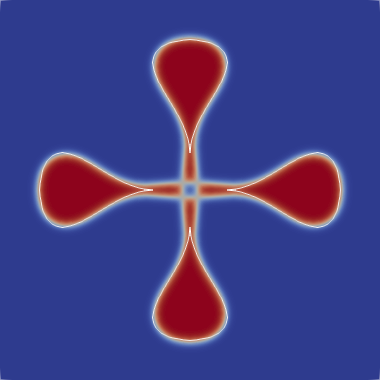}
  \hspace{0.01\textwidth}
  \includegraphics*[width=0.07\textwidth]{./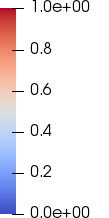}
  \caption{\footnotesize \label{fig:piri2} Evolution of standard flow
    ($\eps=\frac{1}{16}$): Discrete phase-field $\uu_h$ for different times
    $t=0$, $t\approx0.00020$, \mbox{$t\approx0.00485$,} $t\approx0.19485$.}
\end{center}
\end{figure}
\begin{figure}
\begin{center}
  \includegraphics*[width=0.2\textwidth]{./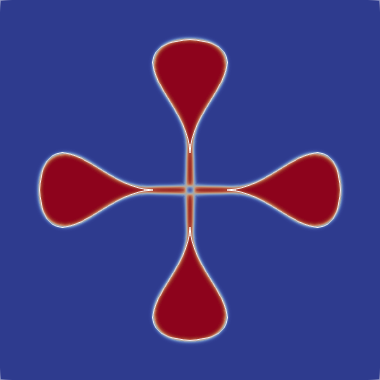}
  \hspace{0.01\textwidth}
  \includegraphics*[width=0.2\textwidth]{./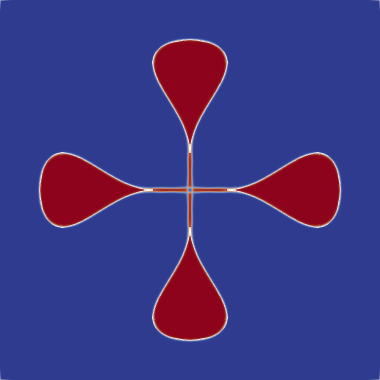}
  \hspace{0.01\textwidth}
  \includegraphics*[width=0.07\textwidth]{./figures/willmoreModA887/phi2Legend.png}
  \caption{\footnotesize \label{fig:piri2Stat} Almost stationary discrete
    solutions $\uu_h$ of standard flow ($\eps=\frac{1}{32}$, left;
    $\eps=\frac{1}{64}$, right).}
\end{center}
\end{figure}
The results for the corresponding simulation of the modified flow are
displayed in Fig.~\ref{fig:piri1Mod}. The method prohibits the
formation of the transversal crossings observed for the standard
flow. Instead, much thicker connections between the cusps are formed and this connections give a positive contribution to the Willmore energy. The discrete Willmore energy in this case becomes
\begin{equation}
  \label{eq:discrWE2}
  \W_{\frac{1}{16}}(\uv_h)  \approx 79.4029
\end{equation}
substantially increased compared to \eqref{eq:discrWE1}. This is even more significant as the configuration still deviates from the set $E$ and hence takes more freedom to minimize the elastic energy.

\begin{figure}
\begin{center}
  \includegraphics*[width=0.2\textwidth]{./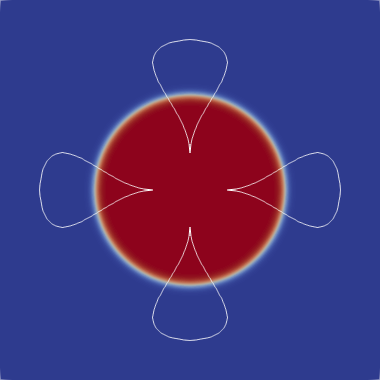}
  \hspace{0.01\textwidth}
  \includegraphics*[width=0.2\textwidth]{./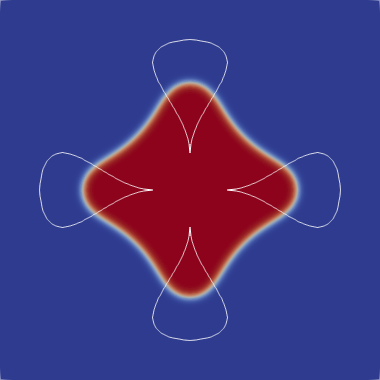}
  \hspace{0.01\textwidth}
  \includegraphics*[width=0.2\textwidth]{./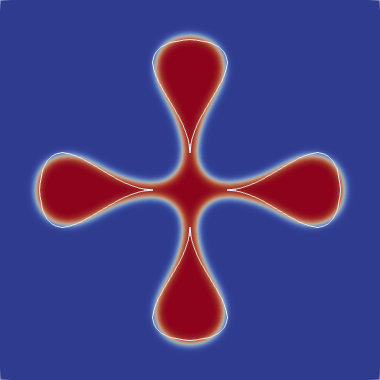}
  \hspace{0.01\textwidth}
  \includegraphics*[width=0.2\textwidth]{./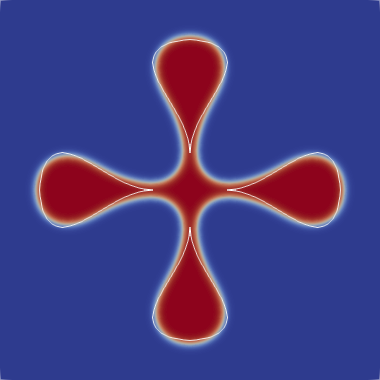}
  \hspace{0.01\textwidth}
  \includegraphics*[width=0.07\textwidth]{./figures/willmoreModA887/phi2Legend.png}
  \caption{\footnotesize \label{fig:piri1Mod} Evolution of modified flow
    ($\eps=\frac{1}{8}$): Discrete phase-field $\uv_h$ for different times
    $t=0$, $t\approx0.00019$, \mbox{$t\approx0.01096$,} $t\approx0.06896$.}
\end{center}
\end{figure}

For the modified flow with a two-component initial condition, one
obtains in Fig.~\ref{fig:piriMod2} an evolution
towards an almost stationary state approximating the piriforms, where
two components are connected by a quarter circle, see
Fig.~\ref{fig:piriMod2_2} for almost stationary states for
approximations with reduced $\varepsilon$ values. The computation of
the discrete Willmore energies then gives
\begin{align}
  \label{eq:discrWE3}
  &\W_{\frac{1}{16}}(\uv_h)  \approx 56.9323, \;
  \W_{\frac{1}{32}}(\uv_h)  \approx 64.2198, \\ \nonumber
  &\W_{\frac{1}{64}}(\uv_h)  \approx 68.8152, \;
  \W_{\frac{1}{128}}(\uv_h)  \approx 72.4494, \;
  \W_{\frac{1}{256}}(\uv_h)  \approx 74.9083,
\end{align}
and the analytic expression of the sharp-interface limit yields
\begin{equation*}
  \W(\Gamma)  \approx 81.0483,
\end{equation*}
where one has to add the Willmore energy of four quarter circles to
the energy of the piriforms in \eqref{eq:analyticWE1}. The simulation
of the standard flow with the same initial conditions as in
Fig.~\ref{fig:piriMod2} leads to results presented in
Fig.~\ref{fig:piri2Stand}. One observes an evolution with intermediate
transversal crossings.

In conclusion we see that the minimization with the standard diffuse Willmore energy does not lead to configurations that attain the minimal energy with respect to the lower semi-continuous envelope. Instead, the cusps are connected with straight ghost interfaces that transversally intersect and have infinite energy $\overline\W$. In contrast, the modified energy leads to optimal configurations that are expected as the minimizing systems of curves in the characterization of $\overline\W$. In both cases, due to the presence of a large number of local minimizer, the stationary states of the corresponding gradient flows depend very much on the initial states. In order to find the minimal energy configurations sophisticated guesses are required.

\begin{figure}
\begin{center}
  \includegraphics*[width=0.2\textwidth]{./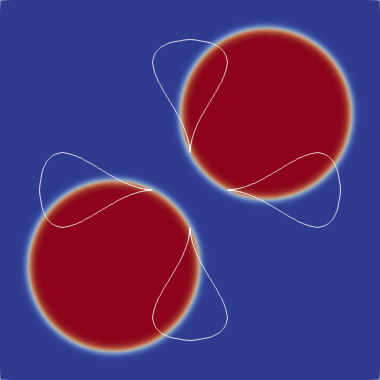}
  \hspace{0.01\textwidth}
  \includegraphics*[width=0.2\textwidth]{./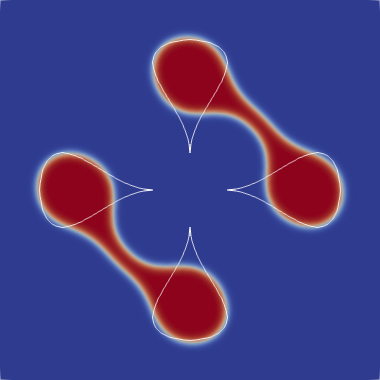}
  \hspace{0.01\textwidth}
  \includegraphics*[width=0.2\textwidth]{./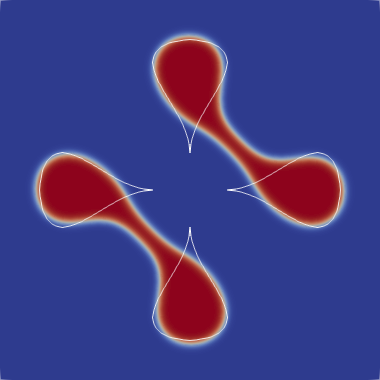}
  \hspace{0.01\textwidth}
  \includegraphics*[width=0.2\textwidth]{./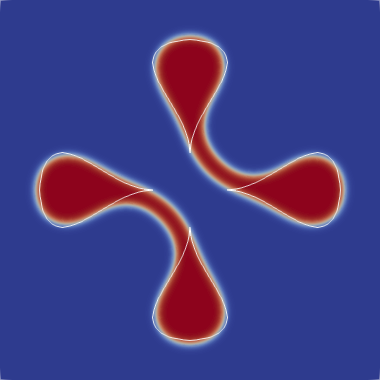}
  \hspace{0.01\textwidth}
  \includegraphics*[width=0.07\textwidth]{./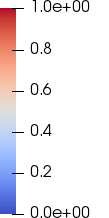}
  \caption{\footnotesize \label{fig:piriMod2} Evolution of modified flow
    ($\eps=\frac{1}{8}$): Discrete phase-field $\uv_h$ for different times
    $t=0$, $t\approx0.00102$, \mbox{$t\approx0.01002$,} $t\approx0.18402$.}
\end{center}
\end{figure}

\begin{figure}
\begin{center}
  \includegraphics*[width=0.2\textwidth]{./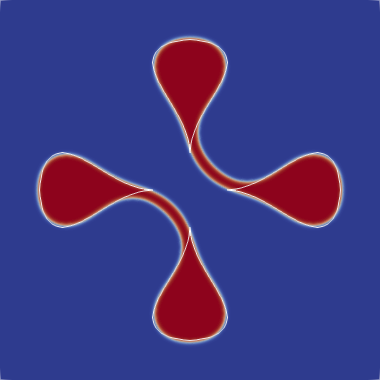}
  \hspace{0.01\textwidth}
  \includegraphics*[width=0.2\textwidth]{./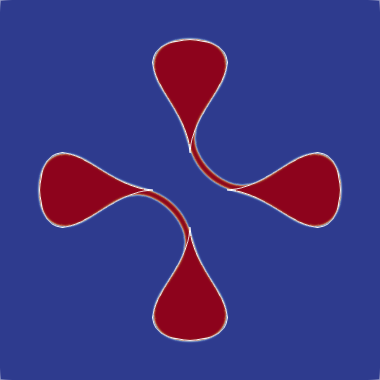}
  \hspace{0.01\textwidth}
  \includegraphics*[width=0.2\textwidth]{./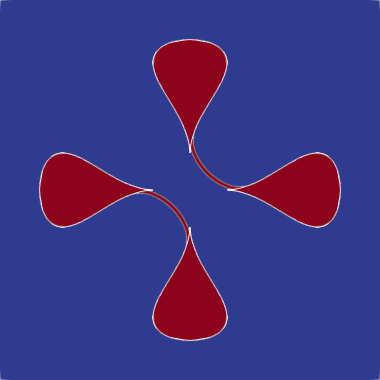}
  \hspace{0.01\textwidth}
  \includegraphics*[width=0.2\textwidth]{./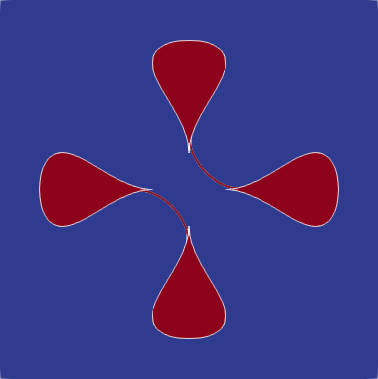}
  \hspace{0.01\textwidth}
  \includegraphics*[width=0.07\textwidth]{./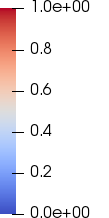}
  \caption{\footnotesize \label{fig:piriMod2_2} Almost stationary
    discrete solutions $\uv_h$ of modified flow for $\varepsilon =
    \frac{1}{16}$, $\varepsilon = \frac{1}{32}$, $\varepsilon =
    \frac{1}{64}$ and $\varepsilon = \frac{1}{128}$ (from left to right).}
\end{center}
\end{figure}

\begin{figure}
\begin{center}
  \includegraphics*[width=0.2\textwidth]{./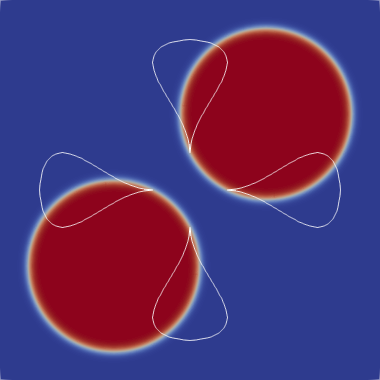}
  \hspace{0.01\textwidth}
  \includegraphics*[width=0.2\textwidth]{./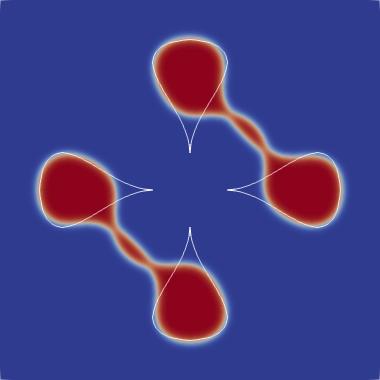}
  \hspace{0.01\textwidth}
  \includegraphics*[width=0.2\textwidth]{./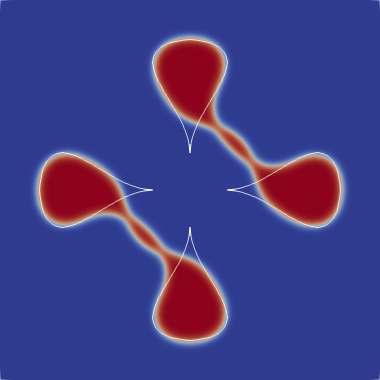}
  \hspace{0.01\textwidth}
  \includegraphics*[width=0.2\textwidth]{./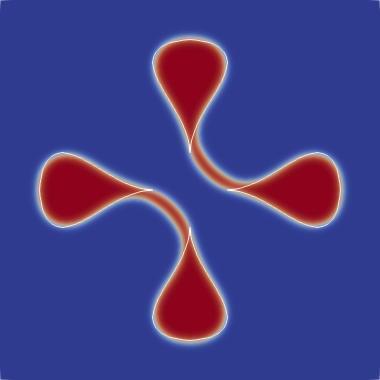}
  \hspace{0.01\textwidth}
  \includegraphics*[width=0.07\textwidth]{./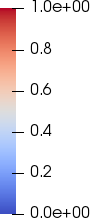}
  \caption{\footnotesize \label{fig:piri2Stand} Evolution of standard flow
    ($\eps=\frac{1}{16}$): Discrete phase-field $\uu_h$ for different times
    $t=0$, $t\approx0.00011$, \mbox{$t\approx0.00159$,} $t\approx0.19510$.}
\end{center}
\end{figure}

\printbibliography

\end{document}

